\documentclass[12pt]{amsart}
\usepackage[latin1]{inputenc}
\usepackage{color}
\usepackage{enumerate}
\usepackage{amssymb}
\usepackage[all]{xy}
\usepackage{hyperref}


\usepackage{enumerate}
\usepackage{tikz}
\usepackage{subcaption}


\vfuzz2pt 
\hfuzz2pt 
\newtheorem{thm}{Theorem}[section]
\newtheorem{cor}[thm]{Corollary}
\newtheorem{lem}[thm]{Lemma}

\newtheorem{prop}[thm]{Proposition}
\theoremstyle{definition}
\newtheorem{defn}[thm]{Definition}

\theoremstyle{remark}
\newtheorem{rem}[thm]{Remark}
\newtheorem{prob}{Problem}
\newtheorem{example}[thm]{Example}
\numberwithin{equation}{section}
\newcommand{\norm}[1]{\left\Vert#1\right\Vert}
\newcommand{\abs}[1]{\left\vert#1\right\vert}
\newcommand{\set}[1]{\left\{#1\right\}}



\newcommand{\R}{\mathbb{R}}

\DeclareMathOperator{\lat}{lat}



\setcounter{tocdepth}{1}

\textwidth=36cc
\baselineskip 16pt
\textheight 620pt
\headheight 20pt
\headsep 20pt
\usepackage{color}
\usepackage{enumerate}
\topmargin 0pt
\footskip 40pt
\parskip 0pt
\oddsidemargin 10pt
\evensidemargin 10pt



\begin{document}
\setcounter{tocdepth}{1}


\title{Lattice embeddings in free Banach lattices over lattices}
\author[Avil\'es]{Antonio Avil\'es}
\address[Avil\'es]{Universidad de Murcia, Departamento de Matem\'{a}ticas, Campus de Espinardo 30100 Murcia, Spain
	\newline
	\href{https://orcid.org/0000-0003-0291-3113}{ORCID: \texttt{0000-0003-0291-3113} } }
\email{\texttt{avileslo@um.es}}

\author[Mart\'inez-Cervantes]{Gonzalo Mart\'inez-Cervantes}
\address[Mart\'inez-Cervantes]{Universidad de Murcia, Departamento de Matem\'{a}ticas, Campus de Espinardo 30100 Murcia, Spain
	\newline
	\href{http://orcid.org/0000-0002-5927-5215}{ORCID: \texttt{0000-0002-5927-5215} } }	

\email{gonzalo.martinez2@um.es}

\author[Rodr\'iguez Abell\'an]{Jos\'e David Rodr\'iguez Abell\'an}
\address[Rodr\'iguez Abell\'an]{Universidad de Murcia, Departamento de Matem\'{a}ticas, Campus de Espinardo 30100 Murcia, Spain 	\newline
	\href{https://orcid.org/0000-0002-2764-0070}{ORCID: \texttt{0000-0002-2764-0070} }}

\email{josedavid.rodriguez@um.es}

\author[Rueda Zoca]{Abraham Rueda Zoca}
\address[Rueda Zoca]{Universidad de Murcia, Departamento de Matem\'{a}ticas, Campus de Espinardo 30100 Murcia, Spain
	\newline
	\href{https://orcid.org/0000-0003-0718-1353}{ORCID: \texttt{0000-0003-0718-1353} }}
\email{\texttt{abraham.rueda@um.es}}
\urladdr{\url{https://arzenglish.wordpress.com}}

\keywords{Banach lattice; Free Banach lattice; Locally complemented}

\subjclass[2010]{46B04, 46B20, 46B40, 46B42}

\begin{abstract} In this article we deal with the free Banach lattice generated by a lattice and its behavior with respect to subspaces. In general, any lattice embedding $i\colon \mathbb{L} \longrightarrow \mathbb{M}$ between two lattices $\mathbb{L} \subseteq \mathbb{M}$ induces a Banach lattice homomorphism $\hat \imath\colon FBL \langle \mathbb{L} \rangle \longrightarrow FBL \langle \mathbb{M}\rangle$ between the corresponding free Banach lattices.
We show that this mapping $\hat \imath$ might not be an isometric embedding neither an isomorphic embedding. In order to provide sufficient conditions for $\hat \imath$ to be an isometric embedding we define the notion of locally complemented lattices and prove that, if $\mathbb L$ is locally complemented in $\mathbb M$, then $\hat \imath$ yields an isometric lattice embedding from $FBL\langle\mathbb L\rangle$ into $FBL\langle\mathbb M\rangle$. We provide a wide number of examples of locally complemented sublattices and, as an application, we obtain that every free Banach lattice generated by a lattice is lattice isomorphic to an AM-space or, equivalently, to a sublattice of a $C(K)$-space.

Furthermore, we prove that $\hat \imath$ is an isomorphic embedding if and only if it is injective, which in turn is equivalent to the fact that every lattice homomorphism $x^*\colon \mathbb{L} \longrightarrow [-1,1]$ extends to a lattice homomorphism $\hat x^*\colon \mathbb{M} \longrightarrow [-1,1]$. Using this characterization we provide an example of lattices $\mathbb{L} \subseteq \mathbb{M}$ for which $\hat \imath$ is an isomorphic not isometric embedding.
\end{abstract}

\maketitle

\section{Introduction}

In the last years, several free structures concerning Banach lattices have been considered and have attracted the attention of many researchers. On the one hand, B.~de Pagter and A.~W.~Wickstead considered in \cite{dPW15} the free Banach lattice generated by a set. This concept was later extended by A.~Avil\'es, J.~Rodr\'iguez and P.~Tradacete in \cite{ART18}, who introduced the free Banach lattice generated by a Banach space (the free Banach lattice generated by a set $A$ is lattice isometric to the free Banach lattice generated by the Banach space $\ell_1(A)$ \cite[Corollary 2.9]{ART18}). Finally, a different approach was considered by A. Avil\'es and J.~D.~Rodr\'iguez Abell\'an in \cite{ARA18} by considering the free Banach lattice generated by a lattice. 

Apart from being natural definitions in the framework of category theory, the concepts of free Banach lattices have shown to be useful in order to provide relevant examples and counterexamples in the theory of Banach lattices. For instance, in \cite{ART18} a free Banach lattice over a Banach space is exhibited as an example of a Banach lattice which is weakly compactly generated as a Banach lattice but not as a Banach space, answering an open question posed by J.~Diestel. Moreover, in \cite{dmrr2}  free Banach lattices were used in order to exhibit examples of lattice homomorphisms which do not attain their norm. See \cite{amr20_2,apr18,ara19,dmrr,jjttt} for background on free Banach lattices.

It is natural to ask whether the inclusion of two objects induces an inclusion of the free Banach lattices generated by them. In \cite{dPW15} it is shown that if $B$ is a non-empty subset of $A$, then the free Banach lattice generated by $B$, $FBL(B)$, is isometric to a sublattice of $FBL(A)$. 
This is no longer true for the free Banach lattice generated by a Banach space. Namely, the problem of determining when does an isomorphic embedding of Banach spaces yield a lattice isomorphic embedding between the corresponding free Banach lattices was studied by T. Oikhberg, M. Taylor, P. Tradacete and V. Troitsky, providing a complete answer in terms of operators taking values on $\ell_1^n$ (see Theorem \ref{theo:peter}).
In general,  if $F$ is a closed subspace of the Banach space $E$ and we denote by $FBL[F]$ and $FBL[E]$ the free Banach lattices generated by $F$ and $E$, respectively, then $FBL[F]$ is isometric to a sublattice of $FBL[E]$ whenever $F$ is a $1$-complemented subspace of $E$ \cite[Corollary 2.8]{ART18} or whenever $E=F^{**}$ \cite[Lemma 2.4]{dmrr}. Notice that in the latest case $F$ is locally complemented in $E$; in general, this condition implies that $FBL[F]$ is isometric to a locally complemented sublattice of $FBL[E]$ (see \cite[Corollary 4.2]{amr2021} for a generalized version of this result).

In this paper we focus on the analogous problem for the free Banach lattice generated by a lattice $\mathbb{L}$, denoted by $FBL \langle \mathbb{L} \rangle$. This problem has been previously considered in \cite[Lemma 5.3]{ARA18}, where it was proved that if $\mathbb{L} \subseteq \mathbb{M}$ are linearly ordered sets, then $FBL \langle \mathbb{L} \rangle$ is isomorphic to a sublattice of $FBL \langle \mathbb{M} \rangle$ (for an isometric version see \cite[Lemma 3.6]{jdthesis}). Furthermore, \cite[Proposition 4.2]{amrr2020} asserts that $FBL\langle\mathbb L\rangle$ is a $1$-complemented Banach sublattice of $FBL\langle\mathbb M\rangle$ whenever $\mathbb L$ is complemented in $\mathbb M$. We wonder how can this condition be relaxed in order to obtain, not 1-complementation, but an isometric inclusion.

\bigskip 

Let us now describe the content of the paper. In Section \ref{section:notation} we introduce necessary notation and preliminary results among which Theorem \ref{theo:condipedro} provides a sufficient condition for a pair of lattices $\mathbb L\subseteq\mathbb M$ to satisfy that $FBL\langle\mathbb L\rangle$ is an isometric sublattice of $FBL\langle\mathbb M\rangle$. In Section \ref{section:main} we introduce, motivated by the definition of locally complemented Banach spaces \cite{kalton84}, the notion of \textit{locally complemented sublattices} in Definition \ref{defi:lattlocomp}. Next, we prove in Theorem \ref{TheoremLocComExtDual} a result of extension of lattice homomorphisms which allows us to obtain the desired result, namely, that if $\mathbb L$ is a locally complemented sublattice of $\mathbb M$, then $FBL\langle \mathbb L\rangle$ is  isometric to a sublattice of $FBL\langle\mathbb M\rangle$ (see Corollary \ref{cor:fbllatisubs}). We conclude Section \ref{section:main} with several distinguished examples of pairs of lattices $\mathbb L\subseteq \mathbb M$ for which $\mathbb L$ is locally complemented in $\mathbb M$ such as $\mathbb M$ being linearly ordered regardless $\mathbb L$, or $\mathbb L$ being an ideal in $\mathbb M$ (see Proposition \ref{PropositionExamplesLocComp}). As a consequence of one of this examples we obtain one of the main results of the article, which asserts that $FBL\langle \mathbb{L} \rangle$ is an AM-space for every lattice $\mathbb{L}$ (Theorem \ref{TheoAMspace}).

In Section \ref{SectionIsomorphic} we focus on the ``isomorphic problem'', i.e.~we study when does the inclusion of $\mathbb L$ into $\mathbb M$ induce a lattice isomorphic embedding of  $FBL\langle\mathbb L\rangle$ into $FBL\langle\mathbb M\rangle$. We prove that the map $\hat{i}\colon FBL\langle\mathbb L\rangle\longrightarrow FBL\langle\mathbb M\rangle$ induced by the canonical inclusion $i\colon \mathbb L \longrightarrow \mathbb M$ is an into isomorphism if, and only if, it is injective, which in turn is equivalent to the fact that every lattice homomorphism $x^* \colon \mathbb L\longrightarrow [-1,1]$ admits an extension $\hat x^* \colon \mathbb M\longrightarrow [-1,1]$. Two main ingredients are needed here: on the one hand, a Banach lattice identification of $FBL\langle\mathbb L\rangle$ with a certain $C(K)$ space when $\mathbb L$ has a maximum and a minimum element \cite[Theorem 2.7]{amrr2020}; on the other hand, the fact that $\mathbb L$ is locally complemented in a lattice with maximum and minimum (see Proposition \ref{PropositionExamplesLocComp}).

Finally, using the results obtained in Sections \ref{section:main} and \ref{SectionIsomorphic}, we provide in Section \ref{section:isomonotisome} an example of (finite) lattices $\mathbb{L} \subseteq \mathbb{M}$ for which the map  $\hat{\imath}\colon FBL\langle\mathbb L\rangle\longrightarrow FBL\langle\mathbb M\rangle$ is an isomorphic not isometric embedding.

\section{Background and preliminary results}\label{section:notation}

Given a Banach space $E$, we denote by $FBL[E]$ the free Banach lattice generated by $E$. This Banach lattice, introduced in \cite{ART18}, contains an isometric copy of $E$ with the property that every bounded operator from $E$ to any Banach lattice $X$ can be extended to a lattice homomorphism from $FBL[E]$ to $X$ preserving the norm. 
In this article we focus on the notion of the free Banach lattice generated by a lattice $\mathbb{L}$ introduced in \cite{ARA18}.

\begin{defn}\label{FBLGBL}
Given a lattice $\mathbb{L}$, a \textit{free Banach lattice over} or \textit{generated by $\mathbb{L}$} is a Banach lattice $F$ together with a norm-bounded lattice homomorphism $\delta_{\mathbb{L}}\colon \mathbb{L} \longrightarrow F$ with the property that for every Banach lattice $X$ and every norm-bounded lattice homomorphism $T \colon \mathbb{L} \longrightarrow X$ there is a unique Banach lattice homomorphism $\hat{T}\colon F \longrightarrow X$ such that $T = \hat{T} \circ \delta_{\mathbb{L}}$ and $\| \hat{T} \| = \| T \|$.
$$\xymatrix{\mathbb{L}\ar_{\delta_{\mathbb{L}}}[d]\ar[rr]^T&&X\\
F\ar_{\hat{T}}[urr]&& }$$
\end{defn}

Here, the norm of $T$ is $\norm{T} := \sup \set{\norm{T(x)} : x \in \mathbb{L}}$, while the norm of $\hat{T}$ is the usual one for Banach spaces.

This definition determines a Banach lattice that we denote by $FBL\langle \mathbb{L}\rangle$ in an essentially unique way. When $\mathbb{L}$ is a distributive lattice (which is a natural assumption in this context, see \cite[Section~3]{ARA18}) the function $\delta_{\mathbb{L}}$ is injective and, loosely speaking, we can view $FBL\langle \mathbb{L} \rangle$ as a Banach lattice which contains a subset lattice-isomorphic to $\mathbb{L}$ in a way that its elements work as free generators modulo the lattice relations on $\mathbb{L}$.
Since every free Banach lattice generated by a lattice can be seen as a free Banach lattice generated by a distributive lattice 
\cite[Proposition 3.2]{ARA18}, for our purpose there is no loss of generality in considering only distributive lattices. For that reason, all lattices considered in this text are assumed to be  distributive.

In order to give an explicit description of it as a space of functions, define $$\mathbb{L}^{\ast} = \set {x^{\ast}\colon \mathbb{L} \longrightarrow [-1,1] : x^{\ast} \text{ is a lattice homomorphism}}.$$ For every $x \in \mathbb{L}$ consider the evaluation function $\delta_x \colon \mathbb{L}^{\ast} \longrightarrow \mathbb{R}$ given by $\delta_x(x^{\ast}) = x^{\ast}(x)$, and for $f \in \mathbb{R}^{\mathbb{L}^{\ast}}$, define $$ \norm{f} = \sup \set{\sum_{i = 1}^n \abs{ f(x_{i}^{\ast})} : n \in \mathbb{N}, \text{ } x_1^{\ast}, \ldots, x_n^{\ast} \in \mathbb{L}^{\ast}, \text{ }\sup_{x \in \mathbb{L}} \sum_{i=1}^n \abs{x_i^{\ast}(x)} \leq 1 }.$$ We will denote this norm by $\norm{\cdot}$ or $\norm{\cdot}_{FBL\langle \mathbb{L} \rangle}$, indistinctly.

\begin{thm}[\mbox{\cite[Theorem 1.2]{ARA18}}]
	Consider $F$ to be the closure of $\lat \lbrace{ \delta_x : x \in \mathbb{L} \rbrace}$ under the norm $\|\cdot\|$ inside the Banach lattice of all functions $f \in \mathbb{R}^{\mathbb{L}^{\ast}}$ with $\|f\|<\infty$, endowed with the norm $\|\cdot\|$, the pointwise order and the pointwise operations.
	Then $F$, together with the assignment $\delta_{\mathbb{L}}(x)=\delta_x$, is the free Banach lattice generated by $\mathbb{L}$.
\end{thm}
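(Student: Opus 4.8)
The plan is to verify directly, for $F$ together with the assignment $\delta_{\mathbb L}(x)=\delta_x$, the three clauses of Definition~\ref{FBLGBL}.

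First I would check that $F$ is a Banach lattice and $\delta_{\mathbb L}$ a norm-bounded lattice homomorphism. The ambient space $Z=\{f\in\mathbb R^{\mathbb L^*}:\norm{f}<\infty\}$, with the pointwise order and operations, is a Banach lattice: positive homogeneity and subadditivity of $\norm{\cdot}$ are immediate; monotonicity (if $|f|\le|g|$ pointwise then $\norm{f}\le\norm{g}$) is clear from the defining supremum; $\norm{\cdot}$ separates points because evaluating that supremum at a single $x^*\in\mathbb L^*$ already gives $|f(x^*)|\le\norm{f}$; and completeness follows by the standard argument that a $\norm{\cdot}$-Cauchy sequence converges pointwise with uniformly small tails. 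Hence $F=\overline{\lat\{\delta_x:x\in\mathbb L\}}$, being the norm-closure of a vector sublattice of $Z$, is itself a Banach lattice. Since every $x^*\in\mathbb L^*$ is a lattice homomorphism $\mathbb L\to[-1,1]$, we have $\delta_{x\vee y}(x^*)=x^*(x\vee y)=x^*(x)\vee x^*(y)=(\delta_x\vee\delta_y)(x^*)$ and likewise for $\wedge$; as the operations on $F$ are pointwise, $\delta_{\mathbb L}$ is a lattice homomorphism, and $\norm{\delta_x}\le1$ because every admissible family $x_1^*,\dots,x_n^*$ satisfies $\sum_i|x_i^*(x)|\le\sup_{y\in\mathbb L}\sum_i|x_i^*(y)|\le1$.

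Next comes the universal property. Let $X$ be a Banach lattice and $T\colon\mathbb L\to X$ a norm-bounded lattice homomorphism; after rescaling we may assume $\norm{T}=1$ (the case $T=0$ is trivial). Uniqueness of $\hat T$ is immediate: it must send $\delta_x$ to $Tx$, hence is determined on $\lat\{\delta_x\}$---every element of which is a lattice-linear expression in finitely many $\delta_x$, with which a lattice homomorphism commutes---and then on its closure $F$ by continuity. For existence, let $\mathcal F$ be the free vector lattice on the underlying set of $\mathbb L$ and let $\Phi\colon\mathcal F\to F$, $\Psi\colon\mathcal F\to X$ be the vector lattice homomorphisms determined by $\Phi(x)=\delta_x$, $\Psi(x)=Tx$, so $\Phi(\mathcal F)=\lat\{\delta_x\}$ is dense in $F$. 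The whole matter reduces to the estimate
\[
\norm{\Psi(\xi)}_X\ \le\ \norm{\Phi(\xi)}_F\qquad(\xi\in\mathcal F).
\]
Granting it, $\Psi$ annihilates $\ker\Phi$, hence factors as a contractive vector lattice homomorphism $\lat\{\delta_x\}\to X$, which extends by density to a lattice homomorphism $\hat T\colon F\to X$ with $\norm{\hat T}\le1$; then $\hat T\circ\delta_{\mathbb L}=T$, and $\norm{\hat T}\ge\norm{Tx}/\norm{\delta_x}\ge\norm{Tx}$ for every $x$ forces $\norm{\hat T}=\norm{T}$.

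The heart of the argument---and what I expect to be the main obstacle---is the displayed estimate. Writing $\xi=\phi(x_1,\dots,x_n)$ with $x_i\in\mathbb L$ and $\phi$ a free lattice expression (a positively homogeneous piecewise-linear function $\mathbb R^n\to\mathbb R$), and replacing $\phi$ by $|\phi|$, which affects neither side since substitution is a lattice homomorphism and both norms are lattice norms, we may assume $\phi\ge0$; then $v:=\phi(Tx_1,\dots,Tx_n)\in X_+$ and it remains to prove
\[
\norm{v}_X\ \le\ \sup\Bigl\{\,\sum_{j=1}^k\bigl|\phi\bigl(\mu_j(x_1),\dots,\mu_j(x_n)\bigr)\bigr|\ :\ k\in\mathbb N,\ \mu_j\in\mathbb L^*,\ \sup_{y\in\mathbb L}\sum_{j=1}^k|\mu_j(y)|\le1\Bigr\}.
\]
To this end I would fix $\varepsilon>0$, choose $\omega\in X^*_+$ with $\norm{\omega}\le1$ and $\omega(v)\ge\norm{v}_X-\varepsilon$, put $\phi$ in a normal form $\bigvee_a\bigwedge_b\ell_{a,b}$ with the $\ell_{a,b}$ linear, and apply the Riesz--Kantorovich formula for the lattice operations in the Dedekind-complete dual $X^*$ to break $\omega$ into finitely many positive functionals of total norm $\le1$ witnessing the value $\omega(v)$. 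The remaining task is to turn this analytic decomposition into an admissible family $\mu_1,\dots,\mu_k\in\mathbb L^*$: here I would pass to the finitely generated---hence finite---distributive sublattice $\mathbb L_0=\langle x_1,\dots,x_n\rangle$ of $\mathbb L$, use its explicit combinatorial structure (every lattice homomorphism $\mathbb L_0\to[-1,1]$ is built from indicators of prime filters), and invoke the crucial fact that prime filters of $\mathbb L_0$ extend to prime filters of $\mathbb L$: the filter of $\mathbb L$ generated by a prime filter $Q$ of $\mathbb L_0$ is disjoint from the ideal of $\mathbb L$ generated by $\mathbb L_0\setminus Q$ (an element between some $q\in Q$ and some $p\in\mathbb L_0\setminus Q$ would force $q\le p$ in $\mathbb L_0$, so $p\in Q$, a contradiction), and a finite chain of prime filters likewise extends to a chain, so every element of $\mathbb L_0^*$ restricts from $\mathbb L^*$. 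Assembling the pieces yields a family in $\mathbb L^*$ satisfying the normalization and realizing at least $\omega(v)-\varepsilon$ on the right-hand side; since $\varepsilon$ was arbitrary the estimate follows. I expect the bookkeeping in this last part---in particular keeping $\sup_{y\in\mathbb L}\sum_j|\mu_j(y)|$ bounded by $1$ while passing between $\mathbb L_0$ and $\mathbb L$---to be where the real work lies. A cleaner alternative is to realize $FBL\langle\mathbb L\rangle$ as the quotient of the free Banach lattice $FBL[\ell_1(\mathbb L)]$ over the closed ideal generated by the relations $\delta_{x\vee y}-\delta_x\vee\delta_y$ and $\delta_{x\wedge y}-\delta_x\wedge\delta_y$ ($x,y\in\mathbb L$), to observe that $\mathbb L^*$ is precisely the slice of the unit ball of $\ell_\infty(\mathbb L)$ on which these relations vanish, and to transport the function-space description of $FBL[\,\cdot\,]$ from \cite{ART18} through this quotient, so that the estimate becomes a comparison of the quotient norm with the displayed supremum over $\mathbb L^*$.
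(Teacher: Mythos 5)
This theorem is background quoted from \cite{ARA18}; the present paper contains no proof of it, so your attempt can only be measured against that reference and against internal consistency with the rest of the paper. The framework of your argument is fine: the verification that $F$ is a Banach lattice, that $\delta_{\mathbb L}$ is a norm-bounded lattice homomorphism, the uniqueness of $\hat T$, and the reduction of existence to the estimate $\|\Psi(\xi)\|_X\le\|\Phi(\xi)\|_F$ are all correct and standard. The gap is in the estimate itself, and it is not a matter of bookkeeping: the ``crucial fact'' you invoke --- that every lattice homomorphism on the finitely generated sublattice $\mathbb L_0=\langle x_1,\dots,x_n\rangle$ restricts from one on $\mathbb L$ --- is false, and this very paper contains the counterexample. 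Your separation argument correctly shows that a \emph{single} prime filter of $\mathbb L_0$ extends to a prime filter of $\mathbb L$, but a chain of two need not extend to a chain: take $\mathbb L$ to be the diamond $\{m,a,b,M\}$ of Example~\ref{examp:nosublattice} and $\mathbb L_0=\{m,a,M\}$; the chain $\{a,M\}\supsetneq\{M\}$ of prime filters of $\mathbb L_0$ has no compatible extension (the only prime filter of the diamond meeting $\mathbb L_0$ exactly in $\{M\}$ is $\{b,M\}$, and any filter containing both $\{b,M\}$ and $a$ contains $m=a\wedge b$). This is precisely the paper's observation that a three-valued homomorphism on $\{m,a,M\}$ does not extend to the diamond, and by Theorem~\ref{theo:caraisomorpinto} your claim would force every sublattice inclusion to induce an injective $\hat\imath$, which Example~\ref{examp:nosublattice} refutes.

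Moreover, the route through $\mathbb L_0$ bounds the wrong quantity. Writing $N_{\mathbb L}$ for the supremum over witnesses in $\mathbb L^*$ normalized over all of $\mathbb L$, and $N_{\mathbb L_0}$ for the analogous supremum over $\mathbb L_0^*$ normalized over $\mathbb L_0$, restriction of witnesses gives $N_{\mathbb L}\le N_{\mathbb L_0}$, and Proposition~\ref{PropExample2} shows this inequality can be strict for lattice expressions in finitely many generators; so producing witnesses in $\mathbb L_0^*$ and hoping to lift them to $\mathbb L^*$ with the value and the normalization preserved is impossible in general --- the needed inequality is $\|v\|_X\le N_{\mathbb L}$, not $\|v\|_X\le N_{\mathbb L_0}$. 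Separately, the step that should produce witnesses at all --- converting the positive functionals obtained from Riesz--Kantorovich applied to $\omega$ into lattice homomorphisms $\mathbb L\to[-1,1]$ --- is never carried out: $\omega_j\circ T$ is merely positive, not a lattice homomorphism, and no mechanism is given for replacing it by elements of $\mathbb L^*$ while keeping $\sup_{y\in\mathbb L}\sum_j|\mu_j(y)|\le1$. Your closing alternative (realize $FBL\langle\mathbb L\rangle$ as the quotient of the free Banach lattice over the set $\mathbb L$ by the closed ideal of lattice relations and compare the quotient norm with the supremum over $\mathbb L^*$) is essentially the strategy of \cite{ARA18}, but as stated it only renames the difficulty: that comparison of norms \emph{is} the content of the theorem, and it is exactly where the real proof does its work.
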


\bigskip

Let us consider a lattice $\mathbb{M}$ and a sublattice $\mathbb{L}$ of $\mathbb{M}$, and let $i\colon \mathbb{L}\longrightarrow \mathbb{M}$ be the inclusion mapping. Note that the mapping $\delta_\mathbb{M}\circ i\colon \mathbb{L}\longrightarrow FBL\langle \mathbb{M} \rangle$ defines a bounded lattice homomorphism from $\mathbb{L}$ into the Banach lattice $FBL\langle \mathbb{M} \rangle$ and, consequently, the universal property of free Banach lattices described in Definition \ref{FBLGBL} yields a Banach lattice homomorphism $\hat \imath\colon FBL \langle \mathbb{L} \rangle \longrightarrow FBL \langle \mathbb{M}\rangle$. Note that, formally speaking, $\hat \imath(\delta_x)=\delta_{i(x)}$ for every $x\in \mathbb{L}$. Throughout the text, when we say that $FBL \langle \mathbb{L} \rangle$ is a sublattice of $FBL\langle \mathbb{M} \rangle$ we will mean that this mapping $\hat \imath$ is an isometric embedding (we will only explicitly mention this mapping in Section \ref{SectionIsomorphic} for the reader convenience).

As we have pointed out in the introduction, given a Banach space $E$ and a subspace $F$ of $E$, it is known that the  map $\hat \imath \colon  FBL[F] \longrightarrow FBL[E]$ induced by the inclusion of $F$ in $E$ is an isometric embedding when $F$ is $1$-complemented in $E$ \cite[Corollary 2.8]{ART18} and when $E=F^{**}$ \cite[Lemma 2.4]{dmrr}. Recently, T.~Oikberg, M.~Taylor, P.~Tradacete and V.~Troitsky\footnote{Result announced in a talk given by M.~Taylor on October 16th, 2020, at the \emph{Banach spaces webinars. 
}} characterized when $\hat \imath$ is an isometry in the following terms.

\begin{thm}[\mbox{T.~Oikberg, M.~Taylor, P.~Tradacete and V.~Troitsky}]\label{theo:peter}
Let $E$ be a Banach space and $F$ be a closed subspace of $E$. Let $i\colon F\longrightarrow E$ be the inclusion operator. The following are equivalent:
\begin{enumerate}
\item $\hat \imath\colon FBL[F]\longrightarrow FBL[E]$ is an isometry from $FBL[F]$ onto its range.
\item For every $n\in\mathbb N$ and every bounded operator $T\colon F\longrightarrow \ell_1^n$ there exists an operator $\hat T\colon E\longrightarrow \ell_1^n$ so that $\hat T\vert_{F}=T$ and $\Vert \hat T\Vert=\Vert T\Vert$.
\end{enumerate}
\end{thm}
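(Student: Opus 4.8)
I would work in the concrete function model of $FBL[\,\cdot\,]$: both $FBL[F]$ and $FBL[E]$ are realized as sublattices of the positively homogeneous functions on $F^{*}$, resp.\ $E^{*}$, with generators $\delta_{x}(v^{*})=v^{*}(x)$ and norm
\[
\|f\|_{FBL[E]}=\sup\Bigl\{\textstyle\sum_{k=1}^{m}|f(v_{k}^{*})|:\ m\in\mathbb N,\ v_{k}^{*}\in E^{*},\ \sup_{x\in B_{E}}\sum_{k}|v_{k}^{*}(x)|\le 1\Bigr\},
\]
and likewise for $F$; call a tuple \emph{admissible} when it satisfies the displayed constraint. If $r=i^{*}\colon E^{*}\to F^{*}$ is the restriction map then $\hat\imath(f)=f\circ r$, so $\hat\imath$ is automatically a lattice homomorphism of norm $\le 1$. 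The elementary dictionary to keep in mind is: an admissible $m$-tuple in $E^{*}$ is exactly the list of coordinate functionals of an operator $E\to\ell_{1}^{m}$ of norm $\le 1$; restriction carries admissible tuples to admissible tuples; and, by homogeneity, condition (2) says precisely that \emph{every admissible tuple in $F^{*}$ is the coordinatewise restriction of an admissible tuple in $E^{*}$} (call such $F^{*}$-tuples \emph{liftable}). Then $(2)\Rightarrow(1)$ is immediate: $\hat\imath$ being a contraction, it suffices to prove $\|f\|_{FBL[F]}\le\|\hat\imath f\|_{FBL[E]}$, and given an admissible tuple $(v_{k}^{*})$ in $F^{*}$ almost norming $f$, we lift it to an admissible $(z_{k}^{*})$ in $E^{*}$ and use $f(v_{k}^{*})=(f\circ r)(z_{k}^{*})=\hat\imath(f)(z_{k}^{*})$.

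For $(1)\Rightarrow(2)$ I would pass to adjoints: $\hat\imath$ is an isometry onto its range iff $\hat\imath^{*}(B_{FBL[E]^{*}})=B_{FBL[F]^{*}}$. From the norm formula, the evaluation functionals $\mathrm{ev}_{v^{*}}(f)=f(v^{*})$ satisfy $t\,\mathrm{ev}_{v^{*}}=\mathrm{ev}_{tv^{*}}$ for $t\ge 0$, and the ``generators'' $\mu_{(v_{k}^{*}),\sigma}:=\sum_{k}\sigma_{k}\mathrm{ev}_{v_{k}^{*}}$ (over admissible tuples $(v_{k}^{*})$ in $F^{*}$ and sign vectors $\sigma$) lie in $B_{FBL[F]^{*}}$, norm $FBL[F]$, and — by the scaling identity, concatenating tuples and rescaling by the convex weights — form a \emph{convex} symmetric set $\mathcal G_{F}$ with $\overline{\mathcal G_{F}}^{\,w^{*}}=B_{FBL[F]^{*}}$; similarly one gets $\mathcal G_{E}$. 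Since $\hat\imath^{*}(\mathrm{ev}_{z^{*}})=\mathrm{ev}_{r(z^{*})}$, the map $\hat\imath^{*}$ carries $\mathcal G_{E}$ onto the subset $\mathcal G_{F}^{\mathrm{lift}}\subseteq\mathcal G_{F}$ of generators attached to liftable tuples, whence $\hat\imath^{*}(B_{FBL[E]^{*}})=\overline{\mathcal G_{F}^{\mathrm{lift}}}^{\,w^{*}}$. Thus $(1)$ holds iff every generator of $B_{FBL[F]^{*}}$ is a $w^{*}$-limit of liftable generators, and the task is to upgrade this to ``every admissible tuple is liftable''.

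So assume, for a contradiction, that $T=(y_{1}^{*},\dots,y_{n}^{*})$ is admissible in $F^{*}$ but not liftable. After deleting zero entries, merging entries on a common positive ray, and passing to a \emph{minimally} non-liftable sub-tuple, one reaches a reduced non-liftable tuple whose associated generator $\mu_{0}$ (for a sign vector with $\|\mu_{0}\|=1$) is an \emph{extreme point} of $B_{FBL[F]^{*}}$. The set $\{d\in B_{FBL[E]^{*}}:\hat\imath^{*}(d)=\mu_{0}\}$ is then a nonempty $w^{*}$-compact convex face of $B_{FBL[E]^{*}}$ (using that, by $(1)$, $\hat\imath^{*}$ maps $B_{FBL[E]^{*}}$ onto $B_{FBL[F]^{*}}$), so by Krein--Milman it contains an extreme point $d^{*}$ of $B_{FBL[E]^{*}}$. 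Invoking the classification of $\mathrm{ext}\,B_{FBL[E]^{*}}$ — each such point is a \emph{finite} generator $\mu_{(z_{k}^{*}),\tau}$ over an admissible tuple in $E^{*}$ — we obtain $\mu_{0}=\hat\imath^{*}(d^{*})=\mu_{(r(z_{k}^{*})),\tau}$. Comparing the two finite representations of $\mu_{0}$ by means of the linear independence of the functionals $\mathrm{ev}_{v^{*}}$ at pairwise non-proportional nonzero $v^{*}$ (obtained by interpolating with the $\delta_{x}$'s), one matches the tuple $(r(z_{k}^{*}))$ against $T$ and, forming the appropriate linear combinations of the $z_{k}^{*}$, produces an admissible tuple in $E^{*}$ restricting to $T$ — contradicting non-liftability. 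Hence $(1)$ fails, which completes the contrapositive.

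\textbf{Where the difficulty lies.} The weight of the argument sits in the last paragraph: (i) the classification $\mathrm{ext}\,B_{FBL[E]^{*}}=\{$finite signed evaluation sums over admissible tuples$\}$, valid for an arbitrary Banach space, which is exactly what lets the face argument return a genuine finite tuple rather than a troublesome $w^{*}$-limit of tuples of unbounded length; and (ii) the reduction making the generator $\mu_{0}$ of a minimally non-liftable tuple an extreme point. A possibly cleaner route is to localize to finite-dimensional $G\subseteq F$ and reformulate (2) as the isometry of the natural maps $G\pten\ell_{\infty}^{n}\to F\pten\ell_{\infty}^{n}\to E\pten\ell_{\infty}^{n}$ (adjoints of the restriction maps on $\mathcal L(\,\cdot\,,\ell_{1}^{n})$) and to separate directly in these preduals; but the separating element still encodes precisely the failure of simultaneous norm-preserving extension into $\ell_{1}^{n}$, so the combinatorial core is the same.
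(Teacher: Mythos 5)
The paper does not actually contain a proof of this theorem: it is quoted (with a footnote) from work of Oikhberg, Taylor, Tradacete and Troitsky announced in a seminar talk, so there is no in-paper proof to compare against. What the paper does prove is the lattice analogue of the implication (2)$\Rightarrow$(1) (Theorem \ref{theo:condipedro}), and it explicitly records the converse as an open problem (Problem \ref{Prob1}). Your argument for (2)$\Rightarrow$(1) is correct and is essentially the same as that of Theorem \ref{theo:condipedro}: lift an almost-norming admissible tuple from $F^{*}$ to $E^{*}$ via the coordinate functionals of the norm-preserving extension $\hat T\colon E\to\ell_1^n$, and use that elements of $FBL[F]$ only see the restriction of a functional to $F$.

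The direction (1)$\Rightarrow$(2), however, is not proved as written; two steps are asserted without justification and each is a genuine gap. First, you invoke a classification of the extreme points of $B_{FBL[E]^{*}}$ as \emph{finite} signed sums $\sum_{k}\tau_{k}\,\mathrm{ev}_{z_{k}^{*}}$ over admissible tuples. The set of such finite generators is convex and $w^{*}$-dense in the ball, as you note, but there is no reason it should be $w^{*}$-closed: its closure will in general contain limits of sums of unbounded length (``integral''-type functionals), and nothing in your argument rules out that some extreme points of the ball are of this kind. This classification is exactly the hard content the face argument needs in order to return a finite tuple, and it is neither proved nor referenced. Second, the reduction of a non-liftable tuple to a ``minimally non-liftable'' one whose generator $\mu_{0}$ is an extreme point of $B_{FBL[F]^{*}}$ is unsupported: minimality under deleting and merging entries is a combinatorial condition on the tuple, while extremality of $\mu_{0}$ is a convex-geometric condition on the functional, and no link between the two is established (it is not even clear that $\|\mu_{0}\|=1$ for such a tuple, which is a prerequisite for extremality). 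Given that the authors state the lattice analogue of precisely this implication as an open problem, one should expect this direction to require a substantive new idea rather than a formal dualization; as it stands, your proposal establishes only (2)$\Rightarrow$(1).
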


In the next theorem we obtain a version for free Banach lattices over lattices of the implication (2)$\Rightarrow$(1) . Let $\mathbb L$ be a lattice. Given a lattice homomorphism $\phi \colon\mathbb L\longrightarrow \ell_1^n$, for some $n\in\mathbb N$, then
$$\phi(x)=\sum_{k=1}^n x_k^*(x)e_k$$
for some $x_k^* \colon\mathbb L\longrightarrow \mathbb R$. Moreover, since the  natural projections $P_k\colon\ell_1^n\longrightarrow \mathbb R$ are lattice homomorphisms, it follows that each $x_k^*=P_k \circ \phi$ is a lattice homomorphism for every $1\leq k\leq n$.

\begin{thm}\label{theo:condipedro}
Let $\mathbb L \subseteq \mathbb M$ be two lattices. Assume that for every $n \in \mathbb{N}$ and for every norm-bounded lattice homomorphism $\phi\colon\mathbb L\longrightarrow \ell_1^n$ there exists a norm preserving extension $\Phi\colon\mathbb M\longrightarrow \ell_1^n$. Then $FBL\langle\mathbb L\rangle$ is a sublattice of $FBL\langle\mathbb M\rangle$.
\end{thm}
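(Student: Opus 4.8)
The plan is to verify that $\hat\imath$ is norm-preserving on the dense sublattice $\lat\{\delta_x : x\in\mathbb L\}$ of $FBL\langle\mathbb L\rangle$; since $\hat\imath$ is a Banach lattice homomorphism of norm at most one (by the defining property in Definition \ref{FBLGBL}, using that $\Vert\delta_y\Vert_{FBL\langle\mathbb M\rangle}\le 1$ for every $y\in\mathbb M$), one automatically has $\Vert\hat\imath(f)\Vert\le\Vert f\Vert$ for all $f$, and an isometry on a dense subset extends to an isometry on the whole space. So I would fix $f\in\lat\{\delta_x:x\in\mathbb L\}$, written as $f=g(\delta_{a_1},\dots,\delta_{a_m})$ for finitely many $a_1,\dots,a_m\in\mathbb L$ and a lattice-linear expression $g$ in $m$ variables; then $\hat\imath(f)=g(\delta_{i(a_1)},\dots,\delta_{i(a_m)})$ because $\hat\imath$ is a linear lattice homomorphism, and for any lattice homomorphism $z^*$ on $\mathbb M$ (resp.\ $w^*$ on $\mathbb L$) the evaluation map is a vector lattice homomorphism into $\mathbb R$ sending $\delta_b\mapsto z^*(b)$, so that $\hat\imath(f)(z^*)=g(z^*(i(a_1)),\dots,z^*(i(a_m)))$ and $f(w^*)=g(w^*(a_1),\dots,w^*(a_m))$.

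The core of the argument is the dictionary between finite admissible families for the $FBL$-norm and norm-$\le 1$ lattice homomorphisms into $\ell_1^n$. Given $\varepsilon>0$, I would pick $x_1^*,\dots,x_n^*\in\mathbb L^*$ with $\sup_{x\in\mathbb L}\sum_{i=1}^n|x_i^*(x)|\le 1$ and $\sum_{i=1}^n|f(x_i^*)|>\Vert f\Vert_{FBL\langle\mathbb L\rangle}-\varepsilon$, and assemble them into $\phi\colon\mathbb L\to\ell_1^n$, $\phi(x)=\sum_{i=1}^n x_i^*(x)e_i$. Since the lattice operations of $\ell_1^n$ are coordinatewise and each $x_i^*$ is a lattice homomorphism, $\phi$ is a lattice homomorphism, and $\Vert\phi\Vert=\sup_{x\in\mathbb L}\sum_{i=1}^n|x_i^*(x)|\le 1$. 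By hypothesis there is a norm-preserving extension $\Phi\colon\mathbb M\to\ell_1^n$; as recorded just before the statement, $\Phi(y)=\sum_{i=1}^n y_i^*(y)e_i$ with each $y_i^*=P_i\circ\Phi$ a lattice homomorphism, and clearly $y_i^*$ extends $x_i^*$. Because $\sup_{y\in\mathbb M}\sum_{i=1}^n|y_i^*(y)|=\Vert\Phi\Vert=\Vert\phi\Vert\le 1$, each $y_i^*$ maps into $[-1,1]$, i.e.\ $y_i^*\in\mathbb M^*$, and $(y_1^*,\dots,y_n^*)$ is an admissible family in the definition of $\Vert\cdot\Vert_{FBL\langle\mathbb M\rangle}$.

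To finish, I would use $y_i^*|_{\mathbb L}=x_i^*$ to get $\hat\imath(f)(y_i^*)=g(y_i^*(i(a_1)),\dots,y_i^*(i(a_m)))=g(x_i^*(a_1),\dots,x_i^*(a_m))=f(x_i^*)$, whence
\[
\Vert\hat\imath(f)\Vert_{FBL\langle\mathbb M\rangle}\ \ge\ \sum_{i=1}^n\bigl|\hat\imath(f)(y_i^*)\bigr|\ =\ \sum_{i=1}^n|f(x_i^*)|\ >\ \Vert f\Vert_{FBL\langle\mathbb L\rangle}-\varepsilon .
\]
Letting $\varepsilon\to 0$ yields $\Vert\hat\imath(f)\Vert\ge\Vert f\Vert$, and combined with the contraction estimate this shows $\hat\imath$ is isometric on $\lat\{\delta_x:x\in\mathbb L\}$, hence on all of $FBL\langle\mathbb L\rangle$. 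I expect the only delicate points to be (i) that a \emph{norm-preserving} (not merely bounded) extension $\Phi$ is precisely what guarantees that the disassembled functionals $y_i^*$ still form an admissible family on the $\mathbb M$-side, so this is exactly where the strength of the hypothesis enters; and (ii) the bookkeeping showing $\hat\imath(f)(y_i^*)=f(x_i^*)$, which rests on $\hat\imath$ acting on generators by $\delta_x\mapsto\delta_{i(x)}$ together with $y_i^*$ extending $x_i^*$.
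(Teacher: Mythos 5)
Your proposal is correct and follows essentially the same route as the paper's proof: near-optimal admissible functionals $x_1^*,\dots,x_n^*$ are bundled into a lattice homomorphism $\phi\colon\mathbb L\to\ell_1^n$ of norm $\le 1$, the hypothesis produces a norm-preserving extension $\Phi$, and its coordinates form an admissible family on $\mathbb M$ extending the $x_i^*$, giving the reverse norm inequality on the dense sublattice $\lat\{\delta_x : x\in\mathbb L\}$. The only cosmetic difference is that you justify $\hat\imath(f)(y_i^*)=f(x_i^*)$ by writing $f$ explicitly as a lattice-linear expression in the generators, whereas the paper simply observes that $f$ depends only on the restriction of the functional to $\mathbb L$.
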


\begin{proof}
Pick $f\in \lat\{\delta_x:x\in \mathbb L\}$, where $\lat\{\delta_x:x\in \mathbb L\}$ denotes indistinctly the vector lattice generated by $\{\delta_x:x\in \mathbb L\}$ inside $FBL\langle\mathbb L\rangle$ or $FBL\langle\mathbb M\rangle$, and let us prove that $\Vert f\Vert_{FBL\langle\mathbb M\rangle}=\Vert f\Vert_{FBL\langle\mathbb L\rangle}$. Let us prove $\Vert f\Vert_{FBL\langle\mathbb M\rangle} \geq \Vert f\Vert_{FBL\langle\mathbb L\rangle}$, which is the non-trivial inequality (the other inequality is a direct consequence of the fact that the map $\hat \imath$ induced by the inclusion has norm $\|\hat \imath\|=1$). To this end, pick $\varepsilon>0$ and choose $x_1^*,\ldots, x_n^*\in \mathbb{L}^*$ so that $\sum_{k=1}^n \vert f(x_k^*)\vert>\Vert f\Vert_{FBL\langle\mathbb L\rangle}-\varepsilon$ and $\sup\limits_{x\in\mathbb L} \sum_{k=1}^n \vert x_k^*(x)\vert\leq 1$. Define the lattice homomorphism $\phi\colon\mathbb L\longrightarrow \ell_1^n$ by the equation
$$\phi(x):=\sum_{k=1}^n x_k^*(x)e_k.$$
Note that, by the definition of the norm of a lattice homomorphism, we get that $\Vert \phi\Vert\leq 1$. By assumption there exists an extension $\Phi\colon\mathbb M\longrightarrow \ell_1^n$ with $\Vert \Phi\Vert\leq 1$. Now, $\Phi$ is given by $\Phi(x):=\sum_{k=1}^n \hat x_k^*(x)e_k$, where $\sum_{k=1}^n \vert \hat x_k^*(x)\vert\leq 1$ for every $x\in \mathbb M$ and $\hat x_k^*(x)=x_k^*(x)$ holds for every $x\in \mathbb L$. Since $f\in \lat \{\delta_x:x\in \mathbb L\}$ we get that $f(x^*)=f(y^*)$ whenever $x^*(x)=y^*(x)$ for every $x\in \mathbb L$. In particular, $f(\hat x_k^*)=f(x_k^*)$ holds for every $k\leq n$. Finally
$$\Vert f\Vert_{FBL\langle\mathbb L\rangle}-\varepsilon<\sum_{k=1}^n \vert f(x_k^*)\vert=\sum_{k=1}^n \vert f(\hat x_k^*)\vert\leq \Vert f\Vert_{FBL\langle \mathbb M\rangle}.$$
Since $\varepsilon>0$ was arbitrary we conclude that $\Vert f\Vert_{FBL\langle\mathbb M\rangle}=\Vert f\Vert_{FBL\langle\mathbb L\rangle}$ for every $f\in \lat\{\delta_x:x\in \mathbb L\}$. The conclusion follows from the fact that $FBL\langle \mathbb{L} \rangle = \overline{ \lat\{\delta_x:x\in \mathbb L\}}^{\Vert \cdot \Vert_{FBL\langle\mathbb L\rangle}}$.
\end{proof}

\begin{prob}
\label{Prob1}
We do not know whether the converse of Theorem \ref{theo:condipedro} is true, i.e.~whether if $\hat \imath\colon FBL\langle\mathbb L\rangle\longrightarrow FBL\langle\mathbb M\rangle$ is an isometry onto its range then 
for every $n\in\mathbb N$ and every norm-bounded lattice homomorphism $\phi \colon \mathbb{L} \longrightarrow \ell_1^n$ there exists a norm preserving extension $\Phi \colon \mathbb{M} \longrightarrow \ell_1^n$.
\end{prob}

\section{Isometric embeddings and local complementation}\label{section:main}

Let $\mathbb M$ be a lattice and $\mathbb L$ be a sublattice of $\mathbb M$. In Theorem \ref{theo:condipedro} we have obtained a sufficient condition which guarantees that $FBL\langle\mathbb L\rangle$ is an isometric sublattice of $FBL\langle\mathbb M\rangle$. This condition is, however, a bit difficult to check in practice because it involves the possibility of extending lattice homomorphisms taking values in $\ell_1^n$. In this section we provide a sufficient condition which is more tractable in the sense that it can be checked  just looking at the lattices $\mathbb L$ and $\mathbb M$. We provide several examples of lattices satisfying this condition and, in particular, one of this example shows that $FBL\langle \mathbb{L} \rangle $ is an AM-space for every lattice $\mathbb{L}$.

In the context of free Banach lattices over Banach spaces, if $F$ is a 1-complemented subspace of $E$ then $FBL[F]$ is a $1$-complemented sublattice of $FBL[E]$ \cite[Corollary 2.8]{ART18}. A lattice version of this result is given by the following proposition. For any set $A$, we write $1_A$ to denote the identity function $1_A \colon A \longrightarrow A$.

\begin{prop}\label{PropLatticeComplemented}
Let $\mathbb{L} \subseteq \mathbb{M}$ be two lattices. Let $i\colon \mathbb L\longrightarrow \mathbb M$ be the canonical inclusion and assume that there exists a lattice homomorphism $r\colon \mathbb M\longrightarrow\mathbb L$ so that $r\circ i=1_\mathbb L$ (in short, $\mathbb L$ is complemented in $\mathbb M$), then $FBL\langle \mathbb{L} \rangle$ is a 1-complemented Banach sublattice of $FBL\langle \mathbb{M} \rangle$.
\end{prop}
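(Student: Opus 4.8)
The statement is a lattice analogue of the Banach-space fact that $FBL[F]$ is $1$-complemented in $FBL[E]$ whenever $F$ is $1$-complemented in $E$. The natural strategy is to exhibit explicit norm-one lattice homomorphisms
$$\hat\imath\colon FBL\langle\mathbb L\rangle\longrightarrow FBL\langle\mathbb M\rangle,\qquad \hat r\colon FBL\langle\mathbb M\rangle\longrightarrow FBL\langle\mathbb L\rangle$$
obtained by the universal property from $\delta_{\mathbb M}\circ i\colon\mathbb L\to FBL\langle\mathbb M\rangle$ and $\delta_{\mathbb L}\circ r\colon\mathbb M\to FBL\langle\mathbb L\rangle$ respectively, and then to check that $\hat r\circ\hat\imath=\mathrm{Id}_{FBL\langle\mathbb L\rangle}$. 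Once this is known, $P:=\hat\imath\circ\hat r$ is a lattice-homomorphism projection of $FBL\langle\mathbb M\rangle$ onto the range of $\hat\imath$ with $\Vert P\Vert\le\Vert\hat\imath\Vert\,\Vert\hat r\Vert\le 1$, and $\hat\imath$ is automatically an isometry onto its range: indeed from $\hat r\circ\hat\imath=\mathrm{Id}$ one gets $\Vert f\Vert=\Vert\hat r\hat\imath f\Vert\le\Vert\hat\imath f\Vert\le\Vert f\Vert$ for all $f\in FBL\langle\mathbb L\rangle$. So the whole proposition reduces to the single identity $\hat r\circ\hat\imath=\mathrm{Id}_{FBL\langle\mathbb L\rangle}$.

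\medskip

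\textbf{Carrying out the steps.} First I would record that the composition $\hat r\circ\hat\imath$ is a Banach lattice homomorphism from $FBL\langle\mathbb L\rangle$ to itself. Next I would evaluate it on the generators: for $x\in\mathbb L$,
$$(\hat r\circ\hat\imath)(\delta_x)=\hat r(\delta_{i(x)})=\delta_{r(i(x))}=\delta_{(r\circ i)(x)}=\delta_x,$$
using the intertwining relations $\hat\imath\circ\delta_{\mathbb L}=\delta_{\mathbb M}\circ i$ and $\hat r\circ\delta_{\mathbb M}=\delta_{\mathbb L}\circ r$ from Definition \ref{FBLGBL}, together with the hypothesis $r\circ i=1_{\mathbb L}$. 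Thus $\hat r\circ\hat\imath$ and $\mathrm{Id}_{FBL\langle\mathbb L\rangle}$ are two Banach lattice homomorphisms $FBL\langle\mathbb L\rangle\to FBL\langle\mathbb L\rangle$ that agree on $\{\delta_x:x\in\mathbb L\}$; since this set generates $FBL\langle\mathbb L\rangle$ as a Banach lattice (it is dense in the lattice sense by the explicit description $FBL\langle\mathbb L\rangle=\overline{\lat\{\delta_x:x\in\mathbb L\}}$), the two homomorphisms coincide on the sublattice they generate and then, by continuity, everywhere. This is exactly the kind of uniqueness argument underlying the universal property, and can also be phrased by invoking the uniqueness of the extension $\widehat{\delta_{\mathbb L}}$ of $\delta_{\mathbb L}$ in Definition \ref{FBLGBL}.

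\medskip

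\textbf{The norm bookkeeping.} The remaining point is to confirm $\Vert\hat\imath\Vert=\Vert\hat r\Vert=1$. For $\hat r$: the map $\delta_{\mathbb L}\circ r\colon\mathbb M\to FBL\langle\mathbb L\rangle$ has norm $\sup_{x\in\mathbb M}\Vert\delta_{r(x)}\Vert\le\sup_{y\in\mathbb L}\Vert\delta_y\Vert\le 1$, so by the norm-preserving clause $\Vert\hat r\Vert\le 1$; similarly $\Vert\hat\imath\Vert\le 1$. (Both are in fact equal to $1$ once we know $\hat r\hat\imath=\mathrm{Id}$, but only the upper bound is needed.) Hence $P=\hat\imath\circ\hat r$ satisfies $P^2=\hat\imath(\hat r\hat\imath)\hat r=\hat\imath\hat r=P$, is a lattice homomorphism, has $\Vert P\Vert\le 1$, and has range $\hat\imath(FBL\langle\mathbb L\rangle)$, which is a sublattice isometric to $FBL\langle\mathbb L\rangle$ by the displayed isometry argument above. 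This gives precisely the assertion that $FBL\langle\mathbb L\rangle$ is a $1$-complemented Banach sublattice of $FBL\langle\mathbb M\rangle$.

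\medskip

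\textbf{Expected obstacle.} There is essentially no analytic difficulty here; the only thing requiring care is the ``agree on generators $\Rightarrow$ agree everywhere'' step, where one must be slightly careful that the appropriate notion of generation (closed sublattice generated by $\{\delta_x\}$) really is all of $FBL\langle\mathbb L\rangle$ — but this is exactly the content of the explicit description theorem of \cite{ARA18} quoted earlier, so it is immediate. One could alternatively prove $\hat r\circ\hat\imath=\mathrm{Id}$ purely formally from the uniqueness part of the universal property applied to the bounded lattice homomorphism $\delta_{\mathbb L}\colon\mathbb L\to FBL\langle\mathbb L\rangle$, avoiding any explicit mention of density. Either way the proof is short.
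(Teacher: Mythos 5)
Your proposal is correct and follows essentially the same route as the paper: both obtain $\hat\imath$ and $\hat r$ from the universal property applied to $\delta_{\mathbb M}\circ i$ and $\delta_{\mathbb L}\circ r$, verify $(\hat r\circ\hat\imath)\circ\delta_{\mathbb L}=\delta_{\mathbb L}$, and conclude $\hat r\circ\hat\imath=1_{FBL\langle\mathbb L\rangle}$ by the uniqueness clause of the universal property. Your extra remarks on the density alternative and the explicit projection $P=\hat\imath\circ\hat r$ are correct but only elaborate on what the paper leaves implicit.
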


\begin{proof}
Let $i \colon \mathbb{L} \longrightarrow \mathbb{M}$ be the inclusion lattice homomorphism and let $r \colon \mathbb{M} \longrightarrow \mathbb{L}$ be a retraction. By using the universal property of the free Banach lattice generated by a lattice, we can find Banach lattice homomorphisms $\hat{\imath} \colon FBL \langle \mathbb{L} \rangle \longrightarrow FBL \langle \mathbb{M} \rangle$ and $\hat{r} \colon FBL \langle \mathbb{M} \rangle \longrightarrow FBL \langle \mathbb{L} \rangle$ such that $\| \hat{\imath} \| = \| \hat{r} \| = 1$ and
 $(\hat{r} \circ \hat{\imath}) \circ \delta_\mathbb{L}=\hat{r} \circ (\hat{\imath} \circ \delta_\mathbb{L})=\hat{r} \circ (\delta_{\mathbb{M}} \circ i)= \delta_{\mathbb{L}} \circ r \circ i = \delta_{\mathbb{L}}$. It follows from the uniqueness in the universal property that $\hat{r} \circ \hat{\imath}=
  1_{FBL\langle \mathbb{L} \rangle}$, so we are done. 
\end{proof}

The assumption of being complemented, though being an intrinsic condition on the lattices $\mathbb L$ and $\mathbb M$, is still quite restrictive. For instance, it is proved in \cite[Lemma 3.6]{jdthesis} that if $\mathbb M$ is a linearly ordered set then any subset $\mathbb L\subseteq \mathbb M$ satisfies that $FBL\langle\mathbb L\rangle$ is an isometric sublattice of $FBL\langle\mathbb M\rangle$. However, it is easy to construct examples where there is no complementation condition (for instance, $\mathbb Q\subseteq \mathbb R$). Because of this reason we look for a weaker intrinsic criterion which still implies isometric containment. In order to do so, we look again at the case of Banach spaces, and we look for a version of the concept of locally complemented Banach spaces. Being inspired by the original definition given by Kalton \cite{kalton84}, we propose the following:

\begin{defn}\label{defi:lattlocomp}
Let $\mathbb L \subseteq \mathbb{M}$ be two lattices. We say that $\mathbb L$ is \textit{locally complemented} in $\mathbb M$ if for every finite sublattice $\mathbb F $ of $\mathbb M$ there exists a lattice homomorphism $T \colon\mathbb F \longrightarrow \mathbb L$ such that
$T(x)=x\ \text{ for every } x\in \mathbb F \cap \mathbb L.$
\end{defn}

It is clear that if $\mathbb L$ is complemented in $\mathbb M$ then it is locally complemented. Many examples exhibited in Proposition \ref{PropositionExamplesLocComp} reveals that local complementation does not imply complementation.

Before presenting examples of locally complemented lattices, we prove the following theorem which justifies our interest in this concept.

\begin{thm}\label{TheoremLocComExtDual}
Let $\mathbb L \subseteq \mathbb{M}$ be two lattices. If $\mathbb L$ is locally complemented in $\mathbb M$, then for any finite-dimensional Banach lattice $X$ and any lattice homomorphism $\phi\colon \mathbb L\longrightarrow X$ of finite norm there exists a lattice homomorphism $\Phi\colon \mathbb M\longrightarrow X$ which extends $\phi$ and so that $\Phi(\mathbb M)\subseteq \overline{\phi(\mathbb L)}$. In particular, $\Vert \Phi\Vert= \Vert\phi\Vert$.
\end{thm}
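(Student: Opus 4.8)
The plan is to reduce the extension of $\phi\colon\mathbb L\to X$ to extending it on larger and larger finite sublattices of $\mathbb M$, using local complementation to produce partial extensions, and then to pass to the limit using a compactness argument inside the finite-dimensional lattice $X$. First I would reduce to the case where $X$ is generated as a Banach lattice by $\phi(\mathbb L)$: replace $X$ by the closed sublattice $Y:=\overline{\lat(\phi(\mathbb L))}$, which is again finite-dimensional (a sublattice of a finite-dimensional Banach lattice), so it suffices to build $\Phi\colon\mathbb M\to Y$; the last assertion $\|\Phi\|=\|\phi\|$ then follows from $\Phi(\mathbb M)\subseteq\overline{\phi(\mathbb L)}$ together with $\Phi$ extending $\phi$, since $\|\Phi\|=\sup_{x\in\mathbb M}\|\Phi(x)\|\le\sup_{y\in\mathbb L}\|\phi(y)\|=\|\phi\|$ and the reverse inequality is trivial.

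Next, for each finite sublattice $\mathbb F\subseteq\mathbb M$ I would use Definition~\ref{defi:lattlocomp} to get a lattice homomorphism $T_{\mathbb F}\colon\mathbb F\to\mathbb L$ fixing $\mathbb F\cap\mathbb L$ pointwise, and set $\Phi_{\mathbb F}:=\phi\circ T_{\mathbb F}\colon\mathbb F\to\phi(\mathbb L)\subseteq Y$. This is a lattice homomorphism with $\|\Phi_{\mathbb F}\|\le\|\phi\|$ and $\Phi_{\mathbb F}|_{\mathbb F\cap\mathbb L}=\phi|_{\mathbb F\cap\mathbb L}$. Now I want a single $\Phi$ on all of $\mathbb M$ agreeing with $\phi$ on $\mathbb L$. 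For each $m\in\mathbb M$, the net $(\Phi_{\mathbb F}(m))_{\mathbb F\ni m}$, indexed by finite sublattices containing $m$ ordered by inclusion, lies in the closed ball of radius $\|\phi\|$ in the finite-dimensional space $Y$, hence has a convergent subnet. A standard diagonal/ultrafilter argument gives, along a single ultrafilter $\mathcal U$ on the directed set of finite sublattices of $\mathbb M$, a well-defined limit $\Phi(m):=\lim_{\mathcal U}\Phi_{\mathbb F}(m)$ for every $m\in\mathbb M$ (using that for fixed $m$ the net is eventually defined). Because lattice operations and scalar multiplication are continuous on the finite-dimensional $Y$, and because any two given elements $m_1,m_2$ eventually lie together in $\mathbb F$ (so $\Phi_{\mathbb F}(m_1\vee m_2)=\Phi_{\mathbb F}(m_1)\vee\Phi_{\mathbb F}(m_2)$ eventually), the limit map $\Phi$ is a lattice homomorphism $\mathbb M\to Y$. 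For $m\in\mathbb L$, eventually $m\in\mathbb F\cap\mathbb L$, so $\Phi_{\mathbb F}(m)=\phi(m)$ eventually and hence $\Phi(m)=\phi(m)$; thus $\Phi$ extends $\phi$. Finally $\Phi(\mathbb M)\subseteq\overline{\phi(\mathbb L)}$ since each $\Phi_{\mathbb F}(m)\in\phi(\mathbb L)$ and $\overline{\phi(\mathbb L)}$ is closed.

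The main obstacle I expect is the bookkeeping in the limit step: one must be careful that the ultrafilter limit is taken over a \emph{directed} set (the finite sublattices of $\mathbb M$ under inclusion, which is directed since the sublattice generated by two finite sets is finite in a distributive lattice) and that ``eventually'' statements for finitely many elements at a time are enough to verify the homomorphism identities — this is fine because each identity to be checked ($\Phi(m_1\vee m_2)=\Phi(m_1)\vee\Phi(m_2)$, $\Phi(m_1\wedge m_2)=\Phi(m_1)\wedge\Phi(m_2)$) involves only finitely many points of $\mathbb M$. An alternative to the ultrafilter argument, avoiding any choice beyond what is already implicit, is to invoke a compactness/Tychonoff argument on $\prod_{m\in\mathbb M}\{y\in Y:\|y\|\le\|\phi\|\}$: the sets $K_{\mathbb F}=\{(y_m)_m : (y_m)_{m\in\mathbb F}\text{ is a lattice homomorphism }\mathbb F\to Y\text{ with }y_m=\phi(m)\text{ for }m\in\mathbb F\cap\mathbb L,\ \|y_m\|\le\|\phi\|\}$ are closed and nonempty (witnessed by $\Phi_{\mathbb F}$ extended arbitrarily, say by $0$, off $\mathbb F$) and have the finite intersection property by directedness, so their intersection contains a point, which is the desired $\Phi$; membership in all $K_{\mathbb F}$ forces $\Phi$ to be a lattice homomorphism on every finite sublattice, hence on $\mathbb M$. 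Either route completes the proof; I would present the Tychonoff version as it is cleanest.
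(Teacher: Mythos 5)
Your proposal is correct and follows essentially the same route as the paper: local complementation yields partial extensions $\phi\circ T_{\mathbb F}$ on finite sublattices, and a compactness argument in the ball of radius $\Vert\phi\Vert$ of the finite-dimensional lattice (the paper takes a cluster point of the net $(\Phi_{\mathbb F})$ in the product space $(\Vert\phi\Vert B_X)^{\mathbb M}$, which is the same device as your ultrafilter/Tychonoff limit) produces the global lattice homomorphism $\Phi$ with $\Phi(\mathbb M)\subseteq\overline{\phi(\mathbb L)}$. The preliminary reduction to $Y=\overline{\lat(\phi(\mathbb L))}$ is harmless but unnecessary.
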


\begin{proof}

Pick any finite sublattice $\mathbb{F}$ of $\mathbb M$. By assumption there exists a lattice homomorphism $T_{\mathbb{F}}\colon \mathbb{F} \longrightarrow \mathbb L$ such that $T_{\mathbb{F}}(x)=x\ \text{ for every } x\in \mathbb{F}\cap \mathbb L.$
Now define $\phi_{\mathbb{F}}:=\phi\circ T_{\mathbb{F}}\colon \mathbb{F} \longrightarrow X$, which is a lattice homomorphism. Moreover, it is clear that, given $x\in \mathbb{F}$, then
$$\Vert \phi_{\mathbb{F}}(x)\Vert=\Vert \phi(T_{\mathbb{F}}(x))\Vert\leq \Vert \phi\Vert,$$
which, in other words, means that $\phi_{\mathbb{F}}(x)\in \Vert \phi\Vert B_{X}$. We extend $\phi_\mathbb{F}$ to $\Phi_{\mathbb{F}}\colon \mathbb M\longrightarrow \Vert \phi\Vert B_{X}$ by the equation
$$\Phi_{\mathbb{F}}(x):=\left\{\begin{array}{cc}
\phi_{\mathbb{F}}(x) & \text{if } x\in \mathbb{F},\\
0 & \text{if } x\notin \mathbb{F}.
\end{array} \right.$$
Notice that $\Phi_{\mathbb{F}}$ is not a lattice homomorphism. However, as it is defined, it is obvious that $\Phi_{\mathbb{F}}$ belongs to the compact space $(\Vert \phi\Vert B_{X})^\mathbb M$, endowed with the product topology. Define
$$\mathcal S:=\{\mathbb{F}\subseteq \mathbb M: \mathbb{F}\mbox{ is a finite sublattice}\}.$$
$\mathcal S$ is a directed set with the order $\mathbb{E}\leq \mathbb{F}$ if and only if  $\mathbb{E}\subseteq \mathbb{F}$. With this point of view, $(\Phi_{\mathbb{F}})_{\mathbb{F}\in \mathcal S}$ is a net in the compact space $((\Vert \phi\Vert B_{X},\Vert\cdot\Vert))^\mathbb M$. By compactness, we get a cluster point $\Phi$ of the net $(\Phi_{\mathbb{F}})_{\mathbb{F}\in \mathcal S}$. Let us prove that $\Phi$ satisfies the desired requirements. First, let us prove that $\Phi$ is a lattice homomorphism. To this end, pick $x,y\in \mathbb M$. Then, for any $\mathbb{F}\in\mathcal S$ such that $\{x,y\}\subseteq \mathbb{F}$ we get
$$\Phi_{\mathbb{F}}(x\vee y)=\phi_{\mathbb{F}}(x\vee y)=\phi_{\mathbb{F}}(x)\vee \phi_{\mathbb{F}}(y)=\Phi_{\mathbb{F}}(x)\vee\Phi_{\mathbb{F}}(y),$$
where we have used that $\phi_{\mathbb{F}}$ is a lattice homomorphism for every $\mathbb{F}$. Since $\Phi$ is a cluster point of $(\Phi_{\mathbb{F}})_{\mathbb{F}\in \mathcal S}$ and the lattice operations on $X$ are norm-continuous we get that 
$$\Phi(x\vee y)=\Phi(x)\vee \Phi(y).$$
A similar argument shows that $\Phi$ also preserves infima. The arbitrariness of $x,y$ implies that $\Phi$ is a lattice homomorphism.

Now we prove that $\Phi(x)=\phi(x)$ for every $x\in \mathbb L$. Pick $x\in \mathbb L$. For every $\mathbb{F}\in\mathcal S$ with $x\in \mathbb{F}$ we get that
$$\Phi_{\mathbb{F}}(x)=\phi_{\mathbb{F}}(x)=\phi(T_{\mathbb{F}}(x))=\phi(x),$$
since $T_{\mathbb{F}}(x)=x$ for every $x\in \mathbb{F}\cap \mathbb L$. This shows that $\Phi$ extends $\phi$.

Let us finally prove that $\Phi(\mathbb M)\subseteq \overline{\phi(\mathbb L)}$. To this end, pick any $x\in \mathbb M$ and notice that, for every $\mathbb{F} \in\mathcal S$ with $x\in \mathbb{F}$ we get that
$$\Phi_{\mathbb{F}}(x)=\phi_{\mathbb{F}}(x)=\Phi(T_{\mathbb{F}}(x))\in \phi(\mathbb L),$$
and now the cluster condition implies that $\Phi(x)\in\overline{\phi(\mathbb L)}$, which concludes the proof. 
\end{proof}

\begin{rem}
After an inspection of the proof, one might think that we can replace $X$ being finite-dimensional with $X$ being a dual Banach lattice by using a Lindenstrauss compactness argument involving the weak*-compactness of the unit ball of a dual Banach space. However, this technique does not work since the resulting mapping $\Phi$ might not be a lattice homomorphism because, in general, it is not true that the lattice operations in a dual Banach lattice are $w^*$-continuous. Indeed, if one considers the Rademacher sequence $(r_n)_{n \in \mathbb{N}}$ in $L_2([0,1])$, then $(r_n)_{n \in \mathbb{N}}\rightarrow 0$ weakly, but $r_n\vee (-r_n)$ is the constant function $1$ for every $n\in\mathbb N$. So $\lim_{n \rightarrow +\infty}( r_n\vee (-r_n))=1\neq 0= (\lim_{n \rightarrow +\infty} r_n)\vee (\lim_{n \rightarrow +\infty} -r_n)$, where these limits are taken with respect to the weak topology.
\end{rem}

An application of the previous theorem to $X=\ell_1^n$ together with Theorem \ref{theo:condipedro} yields the desired consequence.

\begin{cor}\label{cor:fbllatisubs}
Let $\mathbb{L} \subseteq \mathbb{M}$ be two lattices. If $\mathbb{L}$ is locally complemented in $\mathbb{M}$, then $FBL\langle \mathbb{L} \rangle$ is an isometric sublattice of $FBL\langle\mathbb M\rangle$.
\end{cor}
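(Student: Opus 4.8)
The plan is to deduce Corollary \ref{cor:fbllatisubs} directly from the two results we have already established, namely Theorem \ref{theo:condipedro} and Theorem \ref{TheoremLocComExtDual}. These fit together without any new computation: the first gives a sufficient condition for $FBL\langle\mathbb L\rangle$ to be an isometric sublattice of $FBL\langle\mathbb M\rangle$ phrased in terms of extending $\ell_1^n$-valued lattice homomorphisms, and the second produces exactly such extensions whenever $\mathbb L$ is locally complemented in $\mathbb M$.

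In detail, first I would fix $n\in\mathbb N$ and an arbitrary norm-bounded lattice homomorphism $\phi\colon\mathbb L\longrightarrow\ell_1^n$. Since $\ell_1^n$ is a finite-dimensional Banach lattice and $\mathbb L$ is locally complemented in $\mathbb M$, Theorem \ref{TheoremLocComExtDual} applies with $X=\ell_1^n$ and yields a lattice homomorphism $\Phi\colon\mathbb M\longrightarrow\ell_1^n$ extending $\phi$ with $\Phi(\mathbb M)\subseteq\overline{\phi(\mathbb L)}$, and in particular $\Vert\Phi\Vert=\Vert\phi\Vert$. Thus $\Phi$ is a norm-preserving extension of $\phi$. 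Second, since $n$ and $\phi$ were arbitrary, the hypothesis of Theorem \ref{theo:condipedro} is satisfied for the pair $\mathbb L\subseteq\mathbb M$. Applying that theorem gives that $FBL\langle\mathbb L\rangle$ is an isometric sublattice of $FBL\langle\mathbb M\rangle$, which is the assertion of the corollary.

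There is essentially no obstacle here: the corollary is a formal two-line consequence of the preceding machinery, and the only thing to be careful about is matching the norm bookkeeping — that the extension produced by Theorem \ref{TheoremLocComExtDual} is literally norm-preserving (it is, by the ``$\Vert\Phi\Vert=\Vert\phi\Vert$'' clause, and also because $\Vert\Phi\Vert\le 1$ whenever $\Vert\phi\Vert\le 1$, which is the exact form in which Theorem \ref{theo:condipedro} uses it). One could phrase the whole proof in a single sentence, but I would spell out the instantiation $X=\ell_1^n$ explicitly so that the reader sees why finite-dimensionality of the target suffices and why the Remark's warning about dual Banach lattices is not an issue in this application.
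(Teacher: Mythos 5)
Your proof is correct and is exactly the paper's argument: the authors likewise obtain the corollary by applying Theorem \ref{TheoremLocComExtDual} with $X=\ell_1^n$ to produce the norm-preserving extensions required by Theorem \ref{theo:condipedro}. No gaps; the norm bookkeeping you mention is handled identically in the paper.
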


Let us devote the end of this section to provide relevant examples related to the content of Section \ref{section:main}. Up to our knownledge, there is not any concept related to locally complemented sublattices in the literature, so we begin the section by showing many examples of such kind of sublattices in a given lattice.

First, let us recall the following well-known concepts in lattice theory.

\begin{defn}
Let $\mathbb{L} \subseteq \mathbb{M}$ be two lattices.
\begin{enumerate}

\item We say that $\mathbb{L}$ is an \textit{ideal} in $\mathbb{M}$ if $x \in \mathbb{L}$ whenever there is $y \in \mathbb{L}$ with $x \leq y$.

\item We say that $\mathbb{L}$ is a \textit{filter} in $\mathbb{M}$ if $x \in \mathbb{L}$ whenever there is $y \in \mathbb{L}$ with $x \geq y$.

\end{enumerate}
\end{defn}

The notions of filter and ideal are of capital importance in Boolean algebras and play an important role in the Stone duality theorem of Boolean algebras (see \cite{monkhandb}).

Now we are able to exhibit several natural examples of locally complemented sublattices.

\begin{prop}\label{PropositionExamplesLocComp}
Let $\mathbb{L} \subseteq \mathbb{M}$ be two lattices. If,

\begin{enumerate}

\item\label{PropositionExamplesLocComp1} $\mathbb{M}$ is a linearly ordered set, or

\item $\mathbb{M}$ is a lattice and $\mathbb{L}$ is an ideal in $\mathbb{M}$, or

\item $\mathbb{M}$ is a lattice and $\mathbb{L}$ is a filter in $\mathbb{M}$, or

\item\label{ItemLatticeInsideBoundedLattice} $\mathbb{M} = \mathbb{L} \cup \{m,M\}$ with the property that $m=\min \mathbb M$ and $M=\max \mathbb M$,

\end{enumerate}
then $\mathbb{L}$ is locally complemented in $\mathbb{M}$.\end{prop}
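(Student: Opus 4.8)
The plan is to verify Definition \ref{defi:lattlocomp} directly in each of the four cases: given a finite sublattice $\mathbb F\subseteq\mathbb M$, construct a lattice homomorphism $T\colon\mathbb F\longrightarrow\mathbb L$ fixing $\mathbb F\cap\mathbb L$ pointwise. Since $\mathbb F$ is finite, it suffices in each case to describe $T$ explicitly on elements and check it preserves $\vee$ and $\wedge$; the fixed-point condition will be built into the construction.

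For case (\ref{PropositionExamplesLocComp1}), when $\mathbb M$ is linearly ordered, for each $x\in\mathbb F$ I would send $x$ to the largest element of $\mathbb F\cap\mathbb L$ that is $\le x$ if such an element exists, and otherwise to the smallest element of $\mathbb F\cap\mathbb L$ (note $\mathbb F\cap\mathbb L$ is finite and nonempty — if it is empty the statement is vacuous, or one picks any single point; I will handle the empty case by observing $T$ can be constant). On a chain every order-preserving map is automatically a lattice homomorphism, so the only thing to check is that $T$ is order-preserving and fixes $\mathbb F\cap\mathbb L$, both of which are immediate from the definition. For case (4), where $\mathbb M=\mathbb L\cup\{m,M\}$ with $m=\min\mathbb M$, $M=\max\mathbb M$: if $m\in\mathbb F\setminus\mathbb L$ I send it to $\min(\mathbb F\cap\mathbb L)$ (which exists since $\mathbb F\cap\mathbb L\ne\emptyset$ once $\mathbb F$ meets $\mathbb L$; again the vacuous case aside), if $M\in\mathbb F\setminus\mathbb L$ I send it to $\max(\mathbb F\cap\mathbb L)$, and I fix everything in $\mathbb F\cap\mathbb L$. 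One checks that replacing a global minimum by the minimum of a sublattice, and likewise for maxima, preserves all joins and meets within $\mathbb F$ — this is a short case analysis on whether $m$ or $M$ appears among the arguments of a given $\vee$ or $\wedge$.

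Cases (2) and (3) are dual, so I would prove (2) and remark that (3) follows by reversing the order. When $\mathbb L$ is an ideal in $\mathbb M$, for $x\in\mathbb F$ I set $T(x)=\bigvee\{y\in\mathbb F\cap\mathbb L:\ y\le x\}$, using finiteness of $\mathbb F$ to make sense of the join, with the convention that the empty join is handled separately (if no element of $\mathbb F\cap\mathbb L$ lies below $x$; but if $\mathbb F\cap\mathbb L\ne\emptyset$ one can instead define $T(x)$ as the join over $\{y\wedge x: y\in\mathbb F\cap\mathbb L\}$, and here the ideal property guarantees $y\wedge x\le y\in\mathbb L$ forces $y\wedge x\in\mathbb L$, and distributivity of $\mathbb M$ keeps this inside $\mathbb F$ after intersecting with the generated finite sublattice — care is needed that $T(x)\in\mathbb F\cap\mathbb L$). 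For $x\in\mathbb F\cap\mathbb L$ itself, $x$ appears in the defining set, so $T(x)=x$. The verification that $T(x\wedge x')=T(x)\wedge T(x')$ uses distributivity and the ideal hypothesis; the join case $T(x\vee x')=T(x)\vee T(x')$ is the one requiring slightly more care, since joins of ideal elements need not be bounded above by a single witness — here distributivity of $\mathbb M$ is what makes it work.

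The main obstacle I anticipate is case (2)/(3): getting the formula for $T$ so that it genuinely lands in $\mathbb L$ (not merely $\mathbb M$) and genuinely preserves both operations requires using distributivity of $\mathbb M$ in an essential way, and the definition of $T$ must be phrased carefully so that $T(x)$ stays inside the finite sublattice $\mathbb F$ — in particular one should work with the sublattice of $\mathbb M$ generated by $\mathbb F\cup\{x\wedge y: x\in\mathbb F, y\in\mathbb F\cap\mathbb L\}$, which is still finite, or simply note that all the elements $\bigvee(\mathbb F\cap\mathbb L\cap {\downarrow}x)$ for $x\in\mathbb F$ already lie in $\mathbb F$ because $\mathbb F$ is closed under the lattice operations. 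Cases (1) and (4) I expect to be routine; case (4) is in fact a special instance of the chain-like argument combined with the ideal/filter argument applied to $\{m\}$ and $\{M\}$ separately.
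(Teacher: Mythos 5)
Your proposal is correct and follows essentially the same route as the paper: a constant map when $\mathbb F\cap\mathbb L=\emptyset$, a monotone nearest-point map for the chain case, the $\min/\max$ reassignment for case (4), and a join/meet construction for the ideal/filter cases. The only remark worth making is that your worry about $T(x)$ landing in $\mathbb F$ is unnecessary (the definition only asks for $T\colon\mathbb F\to\mathbb L$), and by distributivity your formula $\bigvee\{y\wedge x: y\in\mathbb F\cap\mathbb L\}$ collapses to $x\wedge z$ with $z=\sup(\mathbb F\cap\mathbb L)$, which is exactly the one-line map the paper uses and makes the ideal hypothesis ($x\wedge z\le z\in\mathbb L$) and the homomorphism check immediate.
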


\begin{proof}

Let $\mathbb{F} \subseteq \mathbb{M}$ be a finite sublattice. If $\mathbb{F} \cap \mathbb{L} = \emptyset$, we can take as $T \colon \mathbb{F} \longrightarrow \mathbb{L}$ any constant map. Thus, without loss of generality, we suppose $\mathbb{L}\cap\mathbb{F} \neq \emptyset$. 

For (1), since $\mathbb{M}$ is linearly ordered, the map $T \colon \mathbb{F} \longrightarrow \mathbb{L}$ given by $$T(x)= \left\{ \begin{array}{lcc}
             \inf\{ y \in \mathbb{L}\cap\mathbb{F} : y \geq x\} & \text{if there exists }y \in \mathbb{L}\cap\mathbb{F} \text{ with }y \geq x, \\
             \\ \sup\{ y \in \mathbb{L}\cap\mathbb{F} : y \leq x\} & \text{otherwise,}
             \end{array}
   \right.$$is a well-defined lattice homomorphism such that $T(x) = x$ for every $x \in \mathbb{L}\cap\mathbb{F}$.

For (2), set $z:= \sup (\mathbb{F} \cap \mathbb{L}$). The fact that $\mathbb{L}$ is an ideal in $\mathbb{M}$ allows us to define a map $T \colon \mathbb{F} \longrightarrow \mathbb{L}$ given by $T(x) = x \wedge z$ for every $x \in \mathbb{F}$. The fact that $\mathbb{M}$ is distributive guarantees that $T$ is a lattice homomorphism. Now, if $x \in \mathbb{F} \cap \mathbb{L}$ we clearly have that $T(x) = x \wedge z = x$.

The proof of (3) is similar to the previous one, taking $z:= \min (\mathbb{F} \cap \mathbb{L})$ and $T \colon \mathbb{F} \longrightarrow \mathbb{L}$ the lattice homomorphism given by $T(x) = x \vee z$ for every $x \in \mathbb{F}$.

Finally, for (4), we define $T\colon \mathbb{F} \longrightarrow \mathbb{L}$ by $T(x)=x$ for every $x\in \mathbb{F}\cap \mathbb L$, $T(m)=\inf (\mathbb{F}\cap \mathbb L)$ if $m \in \mathbb{F}$, and $T(M)=\sup (\mathbb{F}\cap \mathbb L)$ if $M \in \mathbb{F}$.
\end{proof}

As we have said before, if $\mathbb L$ is a complemented sublattice of $\mathbb M$, then it is locally complemented. The converse is not true. For example, $\mathbb Q$ is not complemented in $\mathbb R$ but it is locally complemented by \eqref{PropositionExamplesLocComp1} in Proposition \ref{PropositionExamplesLocComp}. However, a kind of converse can be established when we deal with finite sublattices in the following sense.

\begin{prop}
Let $\mathbb{L} \subseteq \mathbb{M}$ be two lattices. If $\mathbb{L}$ is finite and locally complemented in $\mathbb M$, then $\mathbb L$ is complemented in $\mathbb M$.
\end{prop}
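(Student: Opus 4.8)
The plan is to use the hypothesis that $\mathbb{L}$ is finite to produce a single "large enough" finite sublattice $\mathbb{F}$ of $\mathbb{M}$ and then apply local complementation to $\mathbb{F}$ to obtain a retraction defined on all of $\mathbb{M}$. The key point is that finiteness of $\mathbb{L}$ should let us package the whole lattice $\mathbb{M}$'s behaviour into finitely much combinatorial data.

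First I would set $n = |\mathbb{L}|$. The naive hope would be to take $\mathbb{F}$ to be a finite sublattice of $\mathbb{M}$ with $\mathbb{F} \cap \mathbb{L} = \mathbb{L}$, get $T_{\mathbb{F}}\colon \mathbb{F}\to\mathbb{L}$ fixing $\mathbb{L}$, and somehow extend it to $\mathbb{M}$; but there is no reason a lattice homomorphism on a finite sublattice extends to $\mathbb{M}$. The better route mirrors the compactness argument in the proof of Theorem \ref{TheoremLocComExtDual}: since $\mathbb{L}$ is finite, the set $\mathbb{L}^{\mathbb{M}}$ of all functions $\mathbb{M}\to\mathbb{L}$ is compact in the product topology (it is a product of finite discrete spaces). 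For each finite sublattice $\mathbb{F}\subseteq\mathbb{M}$ containing $\mathbb{L}$ (these are cofinal among all finite sublattices), local complementation gives $T_{\mathbb{F}}\colon\mathbb{F}\to\mathbb{L}$ with $T_{\mathbb{F}}(x)=x$ for $x\in\mathbb{L}$; extend $T_{\mathbb{F}}$ arbitrarily (say by a fixed constant value in $\mathbb{L}$) to a function $\widetilde{T}_{\mathbb{F}}\colon\mathbb{M}\to\mathbb{L}$. Directing the finite sublattices containing $\mathbb{L}$ by inclusion, take a cluster point $r$ of the net $(\widetilde{T}_{\mathbb{F}})$ in $\mathbb{L}^{\mathbb{M}}$.

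Then I would check $r$ works. For $x,y\in\mathbb{M}$, every finite sublattice containing $\{x,y\}\cup\mathbb{L}$ is cofinal, and on any such $\mathbb{F}$ we have $\widetilde{T}_{\mathbb{F}}(x\vee y)=T_{\mathbb{F}}(x\vee y)=T_{\mathbb{F}}(x)\vee T_{\mathbb{F}}(y)=\widetilde{T}_{\mathbb{F}}(x)\vee\widetilde{T}_{\mathbb{F}}(y)$, and likewise for $\wedge$; since these are equalities between elements of the \emph{discrete} finite space $\mathbb{L}$, the cluster point $r$ satisfies $r(x\vee y)=r(x)\vee r(y)$ and $r(x\wedge y)=r(x)\wedge r(y)$ — here I would spell out that for each fixed pair $x,y$ the set of $g\in\mathbb{L}^{\mathbb{M}}$ with $g(x\vee y)=g(x)\vee g(y)$ and $g(x\wedge y)=g(x)\wedge g(y)$ is clopen and contains $\widetilde{T}_{\mathbb{F}}$ for all $\mathbb{F}$ large, hence contains $r$. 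So $r$ is a lattice homomorphism $\mathbb{M}\to\mathbb{L}$. For $x\in\mathbb{L}$, $\widetilde{T}_{\mathbb{F}}(x)=x$ for all $\mathbb{F}\supseteq\mathbb{L}$, and $\{g: g(x)=x\}$ is clopen, so $r(x)=x$; thus $r\circ i=1_{\mathbb{L}}$ and $\mathbb{L}$ is complemented in $\mathbb{M}$.

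The main obstacle — really the only subtlety — is justifying that the cluster point is still a lattice homomorphism, which fails in the finite-\emph{dimensional Banach lattice} setting only because of continuity issues that are absent here: the target $\mathbb{L}$ is finite and discrete, so the relevant conditions are all clopen in the product topology and survive passage to a cluster point automatically. Everything else is routine: cofinality of the finite sublattices containing $\mathbb{L}$ (closure of $\{x,y\}\cup\mathbb{L}$ under $\vee,\wedge$ inside $\mathbb{M}$ is finite since $\mathbb{L}$ is finite and $\mathbb{M}$ is distributive), and Tychonoff compactness of $\mathbb{L}^{\mathbb{M}}$.
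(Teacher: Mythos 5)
Your proof is correct, but it takes a genuinely different route from the paper's. The paper does not redo the compactness argument from scratch: it first embeds the finite distributive lattice $\mathbb{L}$ into a finite Boolean algebra $\{0,1\}^n$ (Birkhoff/Stone representation), composes with an injective lattice homomorphism into $\ell_1^n$, applies the already-proved Theorem \ref{TheoremLocComExtDual} to get an extension $\Phi\colon\mathbb{M}\to\ell_1^n$ with range inside $\phi(\mathbb{L})$ (closed since $\mathbb{L}$ is finite), and then pulls back by the inverse $q\colon\phi(\mathbb{L})\to\mathbb{L}$ to obtain the retraction $r=q\circ\Phi$. Your argument instead runs the compactness machinery directly in the space $\mathbb{L}^{\mathbb{M}}$, which is compact as a product of finite discrete spaces, taking a cluster point of the net of locally defined retractions $T_{\mathbb{F}}$ over the finite sublattices $\mathbb{F}\supseteq\mathbb{L}$ (these exist and are cofinal because $\mathbb{M}$ is distributive, so finitely generated sublattices are finite). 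What the paper's route buys is brevity given the machinery already in place, plus it showcases Theorem \ref{TheoremLocComExtDual} as the workhorse; what your route buys is a self-contained, purely order-theoretic proof that never leaves the category of lattices, and, as you rightly observe, the continuity worry that makes the Banach-lattice version delicate disappears entirely because the relevant conditions cut out clopen subsets of $\mathbb{L}^{\mathbb{M}}$ and therefore pass to any cluster point. Both proofs rest on the same finiteness and directedness facts, so neither is more general; yours is arguably the more transparent of the two.
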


\begin{proof}
Since $\mathbb L$ is distributive, there exists a Boolean algebra $\mathbb B$ and an injective lattice homomorphism $\psi\colon \mathbb L\longrightarrow \mathbb B$ \cite[Theorem II.19]{Gratzer}. Since $\mathbb{L}$ is finite, we can assume that $\mathbb{B}=\{0,1\}^n$ for certain $n\in\mathbb N$. Now we consider $\varphi\colon \mathbb B\longrightarrow \ell_1^n$ by $\varphi((x_1,\ldots, x_n))=(y_1,\ldots, y_n)\in \ell_1^n$, where $y_i=\frac{1}{n}$ if $x_i=1$ and $y_i=-\frac{1}{n}$ if $x_i=0$. This defines an injective lattice homomorphism $\phi= \varphi \circ \psi\colon \mathbb L\longrightarrow \ell_1^n$. By Theorem \ref{TheoremLocComExtDual} we can find an extension $\Phi \colon \mathbb M\longrightarrow \ell_1^n$ so that $\Phi (\mathbb M)\subseteq \overline{\phi(\mathbb L)}=\phi(\mathbb L)$, where the last equality holds since $\mathbb L$ is finite. If we consider a lattice homomorphism $q\colon \phi(\mathbb L)\longrightarrow \mathbb L$ so that $\phi \circ q=id_{\phi(\mathbb{L})}$ and $q\circ \phi=id_{\mathbb L}$, then $r:=q\circ \Phi\colon \mathbb M\longrightarrow \mathbb L$ defines a mapping so that $r\circ i=id_{\mathbb L}$, where $i\colon \mathbb L\longrightarrow \mathbb M$ denotes the inclusion lattice homomorphism. Hence $\mathbb L$ is complemented in $\mathbb M$, as desired.
\end{proof}

At first glance example (4) in Proposition \ref{PropositionExamplesLocComp} might seem naïve. Nevertheless, on one hand, it is not true that if $\mathbb{M}=\mathbb{L} \cup \mathbb{F} $ with $\mathbb{F}$ being a finite sublattice then $\mathbb{L}$ is locally complemented in $\mathbb{M}$; a simple counterexample to this fact can be seen in Example \ref{examp:nosublattice}. On the other hand, for every lattice $\mathbb{L}$ we can consider the lattice $\mathbb{M}:=\mathbb{L} \cup \{m,M\}$ obtained adding to $\mathbb{L}$ a minimum $m$ and a maximum $M$. Thus, a combination of example (4) in Proposition \ref{PropositionExamplesLocComp}, Corollary \ref{cor:fbllatisubs} and \cite[Theorem 2.7]{amrr2020} yields that $FBL\langle \mathbb{L} \rangle $ is isomorphic to a sublattice of a $C(K)$-space for every lattice $\mathbb{L}$. Recall that a Banach lattice $X$ is said to be an \textit{AM-space} if $\|x \vee y \|= \max\{\|x\|,\|y\|\}$ for every positive disjoint elements $x,y\in X$. By the classical Kakutani-Bohnenblust-Krein Theorem (see, for instance, \cite[Theorem 3.6]{AA}), a Banach lattice is an AM-space if and only if it is lattice isometric to a sublattice of a $C(K)$-space. Thus, we have proved the following result.
\begin{thm}
\label{TheoAMspace}
$FBL\langle \mathbb{L} \rangle $ is isomorphic to an AM-space for every lattice $\mathbb{L}$.
\end{thm}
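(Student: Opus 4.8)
The strategy is to combine the $AM$-space characterization recalled just before the statement with the local complementation machinery developed in this section, reducing the general case to the case of a lattice with a greatest and a least element, where an explicit $C(K)$ description is already available in the literature. Concretely, given an arbitrary lattice $\mathbb{L}$, I would first pass to the lattice $\mathbb{M} := \mathbb{L} \cup \{m, M\}$ obtained by freely adjoining a new minimum $m$ and a new maximum $M$ (if $\mathbb{L}$ already has a minimum and/or a maximum, one still adds fresh elements $m \leq x$ and $x \leq M$ for all $x$, so that $\mathbb{M}$ is again a distributive lattice containing $\mathbb{L}$ as a sublattice). By part \eqref{ItemLatticeInsideBoundedLattice} of Proposition \ref{PropositionExamplesLocComp}, $\mathbb{L}$ is locally complemented in $\mathbb{M}$, and hence Corollary \ref{cor:fbllatisubs} gives that $\hat{\imath} \colon FBL\langle \mathbb{L}\rangle \longrightarrow FBL\langle \mathbb{M}\rangle$ is an isometric lattice embedding onto its range.

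Next I would invoke \cite[Theorem 2.7]{amrr2020}: since $\mathbb{M}$ has a maximum and a minimum, $FBL\langle \mathbb{M}\rangle$ is lattice isometric (or at least lattice isomorphic) to a space of continuous functions $C(K)$ for an appropriate compact Hausdorff space $K$ — indeed $K$ can be taken to be $\mathbb{M}^{*}$ with a suitable topology, or the relevant quotient thereof. Composing the two maps, $FBL\langle \mathbb{L}\rangle$ is lattice isomorphic to a sublattice of $C(K)$. By the Kakutani--Bohnenblust--Krein theorem (\cite[Theorem 3.6]{AA}), being lattice isomorphic to a sublattice of some $C(K)$ is equivalent to being, up to lattice isomorphism, an $AM$-space; therefore $FBL\langle \mathbb{L}\rangle$ is isomorphic to an $AM$-space, which is exactly the assertion.

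The only genuinely delicate point is the very first one: checking that $\mathbb{M} = \mathbb{L} \cup \{m, M\}$ really is a (distributive) lattice in which $\mathbb{L}$ sits as a sublattice, so that Proposition \ref{PropositionExamplesLocComp}\eqref{ItemLatticeInsideBoundedLattice} literally applies. This is routine — joins and meets involving $m$ or $M$ are forced ($x \vee M = M$, $x \wedge m = m$, $x \vee m = x$, $x \wedge M = x$), and distributivity is inherited because the new identities never create a counterexample to a distributive law — but it is the one spot where one must be a little careful, in particular to ensure that if $\mathbb{L}$ happened to be empty or a singleton the construction still produces a legitimate bounded lattice. Everything after that is a bookkeeping composition of results already proved or cited in the paper; no further estimates or constructions are needed.
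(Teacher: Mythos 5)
Your proposal is correct and follows essentially the same route as the paper: adjoin a minimum and a maximum to obtain $\mathbb{M}=\mathbb{L}\cup\{m,M\}$, use Proposition \ref{PropositionExamplesLocComp}\eqref{ItemLatticeInsideBoundedLattice} together with Corollary \ref{cor:fbllatisubs} to embed $FBL\langle \mathbb{L}\rangle$ isometrically into $FBL\langle \mathbb{M}\rangle$, identify the latter with a $C(K)$-space up to lattice isomorphism via \cite[Theorem 2.7]{amrr2020}, and conclude by the Kakutani--Bohnenblust--Krein theorem. Your only hedge worth settling is that the cited identification is a $2$-lattice isomorphism, not an isometry, which is exactly why the conclusion is ``isomorphic to an AM-space'' rather than ``is an AM-space''.
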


We have finished the previous section with Problem \ref{Prob1}, asking whether the converse of Theorem \ref{theo:condipedro} is true.
We have shown that if $\mathbb L$ is a locally complemented sublattice of $\mathbb M$ then $\mathbb L$ and $\mathbb M$ satisfy the hypothesis of Theorem \ref{theo:condipedro}. Nevertheless, we do not know whether local complementation is equivalent to the hypothesis of Theorem \ref{theo:condipedro}:
\begin{prob}
	Let $\mathbb M$ be a lattice and $\mathbb L$ be a sublattice of $\mathbb{M}$. Assume that $\mathbb L$ and $\mathbb M$ satisfy the hypothesis of Theorem \ref{theo:condipedro}. Is $\mathbb L$ locally complemented in $\mathbb M$?
\end{prob}

\section{Isomorphic  embeddings}\label{SectionIsomorphic}

In this section we deal with isomorphic lattice embeddings. Namely, we provide necessary and sufficient conditions for the map $\hat{\imath} \colon FBL \langle \mathbb{L} \rangle \longrightarrow FBL \langle \mathbb{M} \rangle$ to be an isomorphic lattice embedding. In order to do so, let us assume that $\mathbb L$ is a lattice with maximum $M$ and minimum $m$. Then, $K_{\mathbb L}:=\{x^*\in \mathbb L^*: \max\{\vert x^*(m)\vert, \vert x^*(M)\vert\}=1\} \subseteq [-1,1]^\mathbb{L}$ is a compact space when endowed with the product topology. Consider the map $\phi_\mathbb L \colon FBL\langle \mathbb L\rangle\longrightarrow C(K_\mathbb L)$ defined by the equation
$$\phi_\mathbb L(f)(x^*):=f(x^*)\ \mbox{ for every }  x^*\in K_\mathbb L.$$
It is immediate that $\phi_\mathbb L$ is injective and that $\|\phi_\mathbb L\| \leq 1$ by the definition of the norm in $FBL\langle \mathbb{L} \rangle $. Indeed, it was proved in \cite[Theorem 2.7]{amrr2020} that $\phi_\mathbb L$ is surjective and that $\frac{1}{2}\|f\| \leq \|\phi_\mathbb L (f)\| \leq \|f\|$ for every $f\in FBL\langle \mathbb{L} \rangle $, so $\phi_{\mathbb{L}}$ is a  lattice isomorphism and $FBL\langle \mathbb{L} \rangle $ is $2$-lattice isomorphic to $C(K_\mathbb L)$.

Now, given a lattice $\mathbb M$ with maximum and minimum $M$ and $m$ respectively, a sublattice $\mathbb L\subseteq \mathbb M$ so that $m,M\in\mathbb L$ and the inclusion $i\colon \mathbb L\longrightarrow \mathbb M$, we can consider the Banach lattice isomorphisms $\phi_{\mathbb{L}} \colon FBL\langle \mathbb L\rangle\longrightarrow C(K_\mathbb L)$ and $\phi_M \colon FBL\langle \mathbb M\rangle\longrightarrow C(K_\mathbb M)$. Take $r\colon K_\mathbb M\longrightarrow K_\mathbb L$ the restriction operator, that is, $r(x^*):=x^*\vert_\mathbb L$ for every $x^* \in K_\mathbb M$, and consider the composition operator $C_r \colon C(K_\mathbb L)\longrightarrow C(K_\mathbb M)$ given by the equation $C_r(g):=g\circ r$ for every $g\in C(K_\mathbb L)$. It follows from the definition of $C_r$, $\phi_\mathbb L$ and $\phi_\mathbb M$ that $C_r\circ \phi_\mathbb L=\phi_\mathbb M\circ \hat \imath$ or, in other words,
$$\hat \imath=\phi_\mathbb M^{-1}\circ C_r\circ \phi_\mathbb L.$$
Now we get that $\hat \imath\colon FBL\langle\mathbb L\rangle\longrightarrow FBL\langle\mathbb M\rangle$ is an (into) isomorphism if and only if  $C_r$ is an into isomorphism. It is clear that $C_r$ is an into isomorphism if and only if  $r$ is surjective (in such a case it is obvious that $C_r$ is even an isometry), which is in turn equivalent to the fact that $C_r$ is injective  (see, for instance, \cite[Corollary 4.2.3 and Proposition 7.7.2]{Semadeni}). Now $r$ is surjective if and only if  every lattice homomorphism $y^*\colon \mathbb L\longrightarrow [-1,1]$ with $\max\{\vert y^*(m)\vert, \vert y^*(M)\vert\}=1$ admits an extension to a lattice homomorphism $\hat y^*\colon \mathbb M\longrightarrow [-1,1]$. Notice that, up to a suitable translation, the previous condition is equivalent to the fact that every lattice homomorphism $y^* \colon \mathbb L\longrightarrow [-1,1]$ admits an extension to a lattice homomorphism $\hat y^*\colon \mathbb M\longrightarrow [-1,1]$.  Thus, we have obtained the following result. 

\begin{prop}\label{prop:latisomaxminambos}
Let $\mathbb M$ be a lattice with maximum and minimum $M$ and $m$ respectively. Let $\mathbb L \subseteq \mathbb{M}$ be a sublattice containing $m$ and $M$. The following are equivalent:
\begin{enumerate}
\item $\hat \imath\colon FBL\langle\mathbb L\rangle\longrightarrow FBL\langle\mathbb M\rangle$ is an into isomorphism; 

\item $\hat \imath \colon FBL\langle\mathbb L\rangle\longrightarrow FBL\langle\mathbb M\rangle$ is injective;

\item Every lattice homomorphism $y^* \colon \mathbb L\longrightarrow [-1,1]$ admits an extension to a lattice homomorphism $\hat y^*\colon \mathbb M\longrightarrow [-1,1]$.
\end{enumerate}
\end{prop}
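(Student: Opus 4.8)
The plan is to run exactly the argument sketched in the paragraph immediately preceding the statement, but to organize it cleanly as a chain of equivalences. First I would set up the $C(K)$-representation: since $\mathbb{L}$ and $\mathbb{M}$ both have a maximum $M$ and a minimum $m$, the cited result \cite[Theorem 2.7]{amrr2020} gives Banach lattice isomorphisms $\phi_{\mathbb{L}}\colon FBL\langle\mathbb{L}\rangle\to C(K_{\mathbb{L}})$ and $\phi_{\mathbb{M}}\colon FBL\langle\mathbb{M}\rangle\to C(K_{\mathbb{M}})$, where $K_{\mathbb{L}}=\{x^*\in\mathbb{L}^*:\max\{|x^*(m)|,|x^*(M)|\}=1\}$ and similarly for $K_{\mathbb{M}}$, both compact in the product topology. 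Next I would introduce the restriction map $r\colon K_{\mathbb{M}}\to K_{\mathbb{L}}$, $r(x^*)=x^*|_{\mathbb{L}}$, check it is well-defined (a lattice homomorphism restricted to a sublattice containing $m,M$ lands in $K_{\mathbb{L}}$ since the values at $m$ and $M$ are unchanged) and continuous, and verify the intertwining identity $C_r\circ\phi_{\mathbb{L}}=\phi_{\mathbb{M}}\circ\hat\imath$ by evaluating both sides on a generator $\delta_x$ and testing against a point $x^*\in K_{\mathbb{M}}$. This reduces the whole question to the composition operator $C_r$.

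Then I would carry out the reduction to surjectivity of $r$. Because $\phi_{\mathbb{L}},\phi_{\mathbb{M}}$ are isomorphisms, $\hat\imath$ is an into isomorphism iff $C_r$ is, and $\hat\imath$ is injective iff $C_r$ is. So it suffices to prove, for a continuous map $r$ between compact Hausdorff spaces, that the following are equivalent: (i) $C_r$ is an into isomorphism; (ii) $C_r$ is injective; (iii) $r$ is surjective. The implication (iii)$\Rightarrow$(i) is immediate, since if $r(K_{\mathbb{M}})=K_{\mathbb{L}}$ then $\|C_r(g)\|_\infty=\sup_{x^*\in K_{\mathbb{M}}}|g(r(x^*))|=\sup_{y^*\in K_{\mathbb{L}}}|g(y^*)|=\|g\|_\infty$, i.e.\ $C_r$ is an isometry. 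Conversely, if $r$ is not surjective, Urysohn's lemma produces a nonzero $g\in C(K_{\mathbb{L}})$ vanishing on the closed set $r(K_{\mathbb{M}})$, whence $C_r(g)=0$, so $C_r$ is not injective; this is the standard fact recorded in \cite[Corollary 4.2.3 and Proposition 7.7.2]{Semadeni}. Thus (i)$\Rightarrow$(ii) trivially, and (not iii)$\Rightarrow$(not ii), closing the loop.

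Finally I would translate ``$r$ surjective'' into condition (3). By definition $r$ is surjective precisely when every $y^*\in K_{\mathbb{L}}$, i.e.\ every lattice homomorphism $y^*\colon\mathbb{L}\to[-1,1]$ with $\max\{|y^*(m)|,|y^*(M)|\}=1$, is the restriction of some $x^*\in K_{\mathbb{M}}$, i.e.\ extends to a lattice homomorphism $\hat y^*\colon\mathbb{M}\to[-1,1]$ (the normalization on $m,M$ is automatic on the extension since $m,M\in\mathbb{L}$). The remaining point is to remove the normalization constraint $\max\{|y^*(m)|,|y^*(M)|\}=1$ from the statement, so that (3) reads simply ``every lattice homomorphism $y^*\colon\mathbb{L}\to[-1,1]$ extends''. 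Here I would argue as indicated in the text ``up to a suitable translation'': a general lattice homomorphism $y^*\colon\mathbb{L}\to[-1,1]$ has its image contained in the interval $[y^*(m),y^*(M)]\subseteq[-1,1]$; composing with an increasing affine bijection of $[y^*(m),y^*(M)]$ onto a subinterval of $[-1,1]$ with an endpoint at $\pm1$ (when the image is a single point this is degenerate and handled directly) yields a lattice homomorphism in $K_{\mathbb{L}}$, which by the normalized version extends to $\mathbb{M}$, and then applying the inverse affine map to the extension recovers an extension of the original $y^*$; conversely extensions of the normalized homomorphisms are a special case. I expect the only place needing genuine care is precisely this last affine-rescaling bookkeeping (in particular the degenerate case where $y^*$ is constant), together with the routine verification that $r$ is well-defined and continuous and that the intertwining identity holds on generators; everything else is the soft $C(K)$-duality between surjectivity of $r$ and injectivity of $C_r$.
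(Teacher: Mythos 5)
Your proposal is correct and follows essentially the same route as the paper: the paper proves this proposition in the paragraph preceding its statement, via the identification $\hat\imath=\phi_{\mathbb M}^{-1}\circ C_r\circ\phi_{\mathbb L}$ from \cite[Theorem 2.7]{amrr2020}, the standard equivalence between surjectivity of $r$ and injectivity of $C_r$ from \cite{Semadeni}, and the same ``suitable translation'' to remove the normalization $\max\{|y^*(m)|,|y^*(M)|\}=1$. Your treatment of the affine rescaling and the degenerate constant case is a careful spelling-out of what the paper leaves implicit, and it is sound.
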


Our aim is now to remove the assumptions on the existence of a maximum and a minimum in $\mathbb M$ and $\mathbb L$. This will be done in two steps, where we will apply the results of the previous section. First, we will remove the assumption on $\mathbb L$.

\begin{lem}\label{lemma:latisomaxgran}
Let $\mathbb M$ be a lattice with maximum and minimum $M$ and $m$ respectively. Let $\mathbb L$ be a sublattice of $M$ and $i\colon \mathbb L\longrightarrow \mathbb M$ be the inclusion operator. The following are equivalent: 
\begin{enumerate}
\item $\hat \imath \colon FBL\langle\mathbb L\rangle\longrightarrow FBL\langle\mathbb M\rangle$ is an into isomorphism; 

\item $\hat \imath \colon FBL\langle\mathbb L\rangle\longrightarrow FBL\langle\mathbb M\rangle$ is injective;

\item Every lattice homomorphism $y^* \colon \mathbb L\longrightarrow [-1,1]$ admits an extension to a lattice homomorphism $\hat y^* \colon \mathbb M\longrightarrow [-1,1]$.
\end{enumerate}
\end{lem}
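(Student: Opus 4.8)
The plan is to reduce Lemma~\ref{lemma:latisomaxgran} to the already-proven Proposition~\ref{prop:latisomaxminambos} by enlarging $\mathbb L$ to a sublattice of $\mathbb M$ that does contain $m$ and $M$. Concretely, set $\mathbb L':=\mathbb L\cup\{m,M\}$, which is a sublattice of $\mathbb M$ (adjoining a top and a bottom to a lattice keeps it a lattice, and distributivity is preserved), and $\mathbb L'$ contains $m=\min\mathbb M$ and $M=\max\mathbb M$. We then have factored inclusions $\mathbb L\hookrightarrow\mathbb L'\hookrightarrow\mathbb M$, inducing Banach lattice homomorphisms $FBL\langle\mathbb L\rangle\xrightarrow{\hat\jmath}FBL\langle\mathbb L'\rangle\xrightarrow{\hat k}FBL\langle\mathbb L'\hookrightarrow\mathbb M\rangle=FBL\langle\mathbb M\rangle$ whose composition is $\hat\imath$. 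The key observation is that $\mathbb L$ is locally complemented in $\mathbb L'$: this is exactly case~\eqref{ItemLatticeInsideBoundedLattice} of Proposition~\ref{PropositionExamplesLocComp}, since $\mathbb L'=\mathbb L\cup\{m,M\}$ with $m=\min\mathbb L'$ and $M=\max\mathbb L'$. Hence by Corollary~\ref{cor:fbllatisubs}, $\hat\jmath\colon FBL\langle\mathbb L\rangle\longrightarrow FBL\langle\mathbb L'\rangle$ is an isometric lattice embedding; in particular it is an into isomorphism and is injective.

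Next I would run the three equivalences. For $(1)\Leftrightarrow(2)$: $\hat\imath$ is always a Banach lattice homomorphism, so injectivity is implied by being an into isomorphism; conversely, if $\hat\imath=\hat k\circ\hat\jmath$ is injective then $\hat k$ is injective on the range of $\hat\jmath$, but more usefully we argue at the level of $C(K)$ spaces. Since $m,M\in\mathbb L'$ we may apply Proposition~\ref{prop:latisomaxminambos} to the pair $\mathbb L'\subseteq\mathbb M$: $\hat k$ is an into isomorphism iff it is injective iff every lattice homomorphism $\mathbb L'\to[-1,1]$ extends to one on $\mathbb M$. Because $\hat\jmath$ is an isometric embedding, $\hat\imath=\hat k\circ\hat\jmath$ is an into isomorphism iff $\hat k$ is an into isomorphism, and $\hat\imath$ is injective iff $\hat k$ is injective on $\hat\jmath(FBL\langle\mathbb L\rangle)$. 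To close the loop and get full injectivity of $\hat k$ from injectivity of $\hat\imath$, I would instead transfer condition~(3): I claim $\hat\imath$ injective implies every lattice homomorphism $y^*\colon\mathbb L\to[-1,1]$ extends to $\mathbb M$, and that this in turn implies every lattice homomorphism $\mathbb L'\to[-1,1]$ extends to $\mathbb M$, which by Proposition~\ref{prop:latisomaxminambos} gives that $\hat k$ is an into isomorphism, whence $\hat\imath$ is.

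So the heart of the argument is the equivalence of~(3) for the pair $\mathbb L\subseteq\mathbb M$ with~(3) for the pair $\mathbb L'\subseteq\mathbb M$, together with $(2)\Rightarrow(3)$ directly. For $(2)\Rightarrow(3)$ for $\mathbb L\subseteq\mathbb M$: if some lattice homomorphism $y^*\colon\mathbb L\to[-1,1]$ did not extend to $\mathbb M$, one produces a nonzero $f\in FBL\langle\mathbb L\rangle$ with $\hat\imath(f)=0$; the natural candidate is built from the $C(K_{\mathbb L'})$ picture — the restriction map $K_\mathbb M\to K_{\mathbb L'}$ fails to be surjective, giving an open set of $K_{\mathbb L'}$ missed, and pulling back a bump function there via $\phi_{\mathbb L'}^{-1}$ and then $\hat\jmath^{-1}$ (legitimate since $\hat\jmath$ is an isometric embedding, provided the bump lies in its range) yields a kernel element. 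One must be slightly careful that the bump function actually lies in the range of $\hat\jmath$; but since $\mathbb L$ is locally complemented in $\mathbb L'$, adding just two points $m,M$, the range of $\hat\jmath$ inside $C(K_{\mathbb L'})\cong FBL\langle\mathbb L'\rangle$ is all of it — indeed $\hat\jmath$ is an isometric \emph{onto} identification is too strong, so instead I would argue directly with lattice homomorphisms: a lattice homomorphism $\mathbb L\to[-1,1]$ extends to $\mathbb L'$ (send $m\mapsto\inf$, $M\mapsto\sup$ of suitable finite pieces, or simply $m\mapsto-1$... actually $m\mapsto \bigvee$ over nothing; more safely, a lattice homomorphism on $\mathbb L$ extends to $\mathbb L'=\mathbb L\cup\{m,M\}$ by sending $m$ to $\inf_{\mathbb L}$-compatible value and $M$ similarly, using distributivity exactly as in the proof of Proposition~\ref{PropositionExamplesLocComp}\eqref{ItemLatticeInsideBoundedLattice} restricted to finite sublattices and then patched), so the extension problems for $\mathbb L\subseteq\mathbb M$ and $\mathbb L'\subseteq\mathbb M$ are equivalent.

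The main obstacle I anticipate is precisely this last point: verifying cleanly that a lattice homomorphism $\mathbb L\to[-1,1]$ always extends to $\mathbb L'=\mathbb L\cup\{m,M\}$ and that, conversely, restriction of such an extension recovers the original, so that condition~(3) genuinely transfers between the two pairs. Given that, the lemma follows formally: $(1)\Rightarrow(2)$ is trivial; $(2)\Rightarrow(3)$ for $\mathbb L\subseteq\mathbb M$ via the $C(K)$ kernel construction above (or via contrapositive using Proposition~\ref{prop:latisomaxminambos} applied to $\mathbb L'\subseteq\mathbb M$ after the transfer); and $(3)\Rightarrow(1)$ because (3) for $\mathbb L\subseteq\mathbb M$ gives (3) for $\mathbb L'\subseteq\mathbb M$, hence $\hat k$ is an into isomorphism by Proposition~\ref{prop:latisomaxminambos}, hence $\hat\imath=\hat k\circ\hat\jmath$ is an into isomorphism since $\hat\jmath$ is an isometric embedding by Corollary~\ref{cor:fbllatisubs}.
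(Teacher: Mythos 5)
Your decomposition $\hat\imath=\hat k\circ\hat\jmath$ through $\mathbb L'=\mathbb L\cup\{m,M\}$, with $\hat\jmath$ isometric via Proposition \ref{PropositionExamplesLocComp}(4) and Corollary \ref{cor:fbllatisubs}, followed by an application of Proposition \ref{prop:latisomaxminambos} to the pair $\mathbb L'\subseteq\mathbb M$, is exactly the paper's route. The point you single out at the end as the ``main obstacle'' --- extending a lattice homomorphism $y^*\colon\mathbb L\to[-1,1]$ to $\mathbb L'$ while keeping values in $[-1,1]$ --- is in fact not an obstacle: apply Theorem \ref{TheoremLocComExtDual} with $X=\mathbb R$ to the locally complemented pair $\mathbb L\subseteq\mathbb L'$; the extension it produces takes values in $\overline{y^*(\mathbb L)}\subseteq[-1,1]$. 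This is precisely how the paper passes from ``every element of $(\mathbb L')^*$ extends to $\mathbb M^*$'' down to ``every element of $\mathbb L^*$ extends to $\mathbb M^*$''.

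The genuine gap lies in the opposite transfer, which you assert but never prove: that condition (3) for $\mathbb L\subseteq\mathbb M$ implies condition (3) for $\mathbb L'\subseteq\mathbb M$ (equivalently, that injectivity of $\hat\imath$ forces injectivity of $\hat k$ on all of $FBL\langle\mathbb L'\rangle$ rather than merely on the range of $\hat\jmath$). As a standalone implication this is false: take $\mathbb M=\{m,a,b,M\}$ to be the diamond of Example \ref{examp:nosublattice} and $\mathbb L=\{a\}$. Every lattice homomorphism on the singleton $\{a\}$ extends to $\mathbb M$ as a constant, so (3) holds for $\mathbb L\subseteq\mathbb M$; yet the homomorphism $m\mapsto-1$, $a\mapsto 0$, $M\mapsto 1$ on $\mathbb L'=\{m,a,M\}$ does not extend to $\mathbb M$, so (3) fails for $\mathbb L'\subseteq\mathbb M$ and $\hat k$ is not injective --- while $\hat\imath$ is injective, since $\hat\imath(f)(y^*)=f(y^*(a))$ and the constant homomorphisms on $\mathbb M$ already realize every value of $y^*(a)$ in $[-1,1]$. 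Consequently both of your nontrivial implications, $(2)\Rightarrow(3)$ and $(3)\Rightarrow(1)$, pass through a step that fails in general; to close them one must work with the subobject $\hat\jmath\bigl(FBL\langle\mathbb L\rangle\bigr)$ inside $C(K_{\mathbb L'})$, e.g.\ by arranging the bump function to factor through the restriction map $K_{\mathbb L'}\to\mathbb L^*$, rather than arguing about the full pair $\mathbb L'\subseteq\mathbb M$. In fairness, the paper's own proof asserts the same two-way equivalences (``$\hat\imath$ is injective if and only if $\hat k$ is'' and the equivalence of the two extension conditions) without justifying the problematic directions, so you have reproduced its argument faithfully, weak point included; but the claim you yourself identify as ``the heart of the argument'' is exactly where the argument is not closed.
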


\begin{proof}
Let $j\colon \mathbb L\longrightarrow \mathbb L\cup\{m,M\}$ and $k\colon \mathbb L\cup\{m,M\}\longrightarrow \mathbb M$ be the canonical inclusions. It is clear that $i=k\circ j$, from where $\hat \imath=\hat k\circ \hat j$, where $\hat j\colon FBL\langle\mathbb L\rangle\longrightarrow FBL\langle\mathbb L\cup\{m,M\}\rangle$ and $\hat k\colon FBL\langle\mathbb L\cup\{m,M\}\rangle\longrightarrow FBL\langle\mathbb M\rangle$ are maps induced by $j$ and $k$ respectively. By (4) in Proposition \ref{PropositionExamplesLocComp} we get that $\mathbb L$ is locally complemented in $\mathbb L\cup\{m,M\}$ and, consequently, $\hat j$ is an isometry by Corollary \ref{cor:fbllatisubs}. Hence, $\hat \imath$ is an isomorphism (resp.~injective) if and only if  $\hat k$ is an isomorphism (resp. injective). By Proposition \ref{prop:latisomaxminambos} we get that this is equivalent to the fact that every element of $(\mathbb L\cup\{m,M\})^*$ extends to an element of $\mathbb M^*$. Taking $X=\R$ in Theorem \ref{TheoremLocComExtDual} we get that this condition is in turn equivalent to the fact that every element of $\mathbb L^*$ extends to an element of $\mathbb M^*$, which concludes the proof.
\end{proof}

The previous lemma will allow us to obtain a complete characterization of when $\hat \imath$ is an into isomorphism.

\begin{thm}\label{theo:caraisomorpinto}
Let $\mathbb M$ be a lattice and $\mathbb L\subseteq \mathbb{M}$ be a sublattice. The following are equivalent: 
\begin{enumerate}
\item $\hat \imath \colon FBL\langle\mathbb L\rangle\longrightarrow FBL\langle\mathbb M\rangle$ is an into isomorphism 

\item $\hat \imath \colon FBL\langle\mathbb L\rangle\longrightarrow FBL\langle\mathbb M\rangle$ is injective.

\item Every lattice homomorphism $y^* \colon \mathbb L\longrightarrow [-1,1]$ admits an extension to a lattice homomorphism $\hat y^* \colon \mathbb M\longrightarrow [-1,1]$.
\end{enumerate}
\end{thm}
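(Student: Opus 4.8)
The plan is to reduce the general statement to the already-established Lemma \ref{lemma:latisomaxgran}, which handles the case where the ambient lattice $\mathbb{M}$ possesses both a maximum and a minimum. The natural device is to embed $\mathbb{M}$ into $\mathbb{M}' := \mathbb{M} \cup \{m, M\}$ obtained by freely adjoining a new minimum $m$ and a new maximum $M$ (if $\mathbb{M}$ already has one or both, nothing is added). Let $k \colon \mathbb{M} \longrightarrow \mathbb{M}'$ be the canonical inclusion; by item (4) of Proposition \ref{PropositionExamplesLocComp}, $\mathbb{M}$ is locally complemented in $\mathbb{M}'$, so by Corollary \ref{cor:fbllatisubs} the induced map $\hat k \colon FBL\langle \mathbb{M} \rangle \longrightarrow FBL\langle \mathbb{M}' \rangle$ is an isometric lattice embedding. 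Since $\mathbb{L} \subseteq \mathbb{M} \subseteq \mathbb{M}'$ and the inclusion $\mathbb{L} \hookrightarrow \mathbb{M}'$ factors as $k \circ i$, functoriality of $FBL\langle \cdot \rangle$ gives $\widehat{k \circ i} = \hat k \circ \hat \imath$. Because $\hat k$ is an isometric (in particular injective, in particular isomorphic onto its range) embedding, $\hat \imath$ is an into isomorphism (resp.\ injective) if and only if $\widehat{k \circ i}$ is an into isomorphism (resp.\ injective).

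Now $\mathbb{M}'$ has a maximum and minimum, so Lemma \ref{lemma:latisomaxgran} applies to the pair $\mathbb{L} \subseteq \mathbb{M}'$: the statements ``$\widehat{k \circ i}$ is an into isomorphism'', ``$\widehat{k \circ i}$ is injective'', and ``every lattice homomorphism $y^* \colon \mathbb{L} \longrightarrow [-1,1]$ extends to a lattice homomorphism $\mathbb{M}' \longrightarrow [-1,1]$'' are mutually equivalent. Combining with the previous paragraph, this already shows (1) $\Leftrightarrow$ (2), and reduces the theorem to proving that the extension property with respect to $\mathbb{M}'$ is equivalent to the extension property with respect to $\mathbb{M}$, i.e.\ that item (3) for the pair $\mathbb{L} \subseteq \mathbb{M}$ is equivalent to item (3) for the pair $\mathbb{L} \subseteq \mathbb{M}'$.

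One direction is trivial: if $y^*$ extends to $\mathbb{M}'$, restrict to $\mathbb{M}$. For the converse, suppose every $y^* \in \mathbb{L}^*$ extends to some $z^* \in \mathbb{M}^*$; we must further extend $z^*$ across the two adjoined points. Since a lattice homomorphism into $[-1,1]$ is monotone, the only constraint is that we set $z^*(m)$ to a value $\le \inf_{x \in \mathbb{M}} z^*(x)$ and $z^*(M)$ to a value $\ge \sup_{x \in \mathbb{M}} z^*(x)$, both lying in $[-1,1]$; taking $z^*(m) = -1$ and $z^*(M) = 1$ works and visibly preserves all meets and joins involving $m$ or $M$ (here one uses that $m$, $M$ are a genuine bottom and top of $\mathbb{M}'$). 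This produces the required extension to $\mathbb{M}'$, completing the equivalence (1) $\Leftrightarrow$ (2) $\Leftrightarrow$ (3).

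The only point that requires a little care — and which I would expect to be the main (mild) obstacle — is the bookkeeping when $\mathbb{M}$ (or $\mathbb{L}$) already has a maximum or minimum, so that adjoining $m$ and $M$ either does nothing or changes which element is extremal; one should phrase the argument so that $\mathbb{M}' = \mathbb{M} \cup \{m, M\}$ with $m = \min \mathbb{M}'$, $M = \max \mathbb{M}'$ covers all cases uniformly (as in item (4) of Proposition \ref{PropositionExamplesLocComp}), and check that distributivity is preserved so that $FBL\langle \mathbb{M}' \rangle$ is defined and Corollary \ref{cor:fbllatisubs} genuinely applies. Everything else is a direct concatenation of results already proved above.
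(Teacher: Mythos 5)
Your proof is correct and follows essentially the same route as the paper: adjoin a new minimum and maximum to $\mathbb M$, use item (4) of Proposition \ref{PropositionExamplesLocComp} together with Corollary \ref{cor:fbllatisubs} to see that $FBL\langle\mathbb M\rangle$ embeds isometrically into $FBL\langle\mathbb M\cup\{m,M\}\rangle$, and then reduce to Lemma \ref{lemma:latisomaxgran}. The only (harmless) divergence is the final step: where you extend a lattice homomorphism from $\mathbb M$ to $\mathbb M\cup\{m,M\}$ by hand, assigning the values $-1$ and $1$ at $m$ and $M$, the paper instead invokes Theorem \ref{TheoremLocComExtDual} with $X=\mathbb R$; both arguments are valid.
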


\begin{proof}
Set $\mathbb O:=\mathbb M\cup\{m,M\}$ the lattice obtained adding a maximum $M$ and a minimum $m$ to $\mathbb{M}$. We can consider $j\colon \mathbb M\longrightarrow \mathbb O$ the canonical inclusion and define $k:=j\circ i \colon \mathbb L\longrightarrow \mathbb O$. We get that $\hat k=\hat j\circ \hat \imath$, where $\hat j \colon FBL\langle\mathbb M\rangle\longrightarrow FBL\langle\mathbb O\rangle$ and $\hat k \colon FBL\langle\mathbb L\rangle\longrightarrow FBL\langle\mathbb O\rangle$ are the corresponding induced operators. By (4) in Proposition \ref{PropositionExamplesLocComp} we get that $\mathbb M$ is locally complemented in $\mathbb O$ and, consequently, $\hat j$ is an isometry by Corollary \ref{cor:fbllatisubs}. Now it is clear that $\hat \imath$ is an into isomorphism (respectively injective) if and only if  so is $\hat k$ and, by Lemma \ref{lemma:latisomaxgran}, this is equivalent to the fact that every element in $\mathbb L^*$ extends to an element in $\mathbb O^*$, which is in turn equivalent to the fact that every element in $\mathbb L^*$ extends to an element in $\mathbb M^*$ by Theorem \ref{TheoremLocComExtDual}, as desired.
\end{proof}

As an application we can easily obtain examples of lattices $\mathbb{L} \subseteq \mathbb M$ for which the canonical inclusion is not an isomorphism.

\begin{example}\label{examp:nosublattice}
Let $\mathbb{M} = \{ m,a,b,M \}$ be the lattice with four elements with $m$ being the minimum, $M$ the maximum, and $a$ and $b$ not comparable between them. Then, every lattice homomorphism $x^* \in \mathbb{M}^*$ satisfies $x^*(a)\vee x^*(b)=x^*(M)$ and  $x^*(a)\wedge x^*(b)=x^*(m)$, so $x^*(a),x^*(b) \in \{x^*(m),x^*(M)\}$ and $x^*$ takes at most two different values.
Nevertheless, in the sublattice $\mathbb{L} = \{ m,a,M \}$, which is linearly ordered, we can easily construct lattice homomorphisms taking three different values. Such homomorphisms in $\mathbb{L}^*$ cannot be extended to homomorphisms in $\mathbb{M}^*$. Thus, by Theorem \ref{theo:caraisomorpinto}, $\hat \imath \colon FBL\langle\mathbb L\rangle\longrightarrow FBL\langle\mathbb M\rangle$ is not an into isomorphism.
\end{example}

\section{An isomorphic embedding which is not an isometry}\label{section:isomonotisome}

In this section we aim to give an example of a lattice $\mathbb M$ and a sublattice $\mathbb L$ so that the mapping $\hat \imath \colon FBL\langle\mathbb L\rangle\longrightarrow FBL\langle\mathbb M\rangle$ is an into isomorphism but not an isometry.

Let $\mathbb{M} = \{1,2,3\} \times \{1,2,3\}$ endowed with the coordinatewise order, and take the sublattice $\mathbb{L} = \{(1,1),(2,2),(2,3),(3,2),(3,3)\}$. 

\begin{figure}[h]
	\begin{subfigure}[b]{.45\textwidth}
		\centering
		\begin{tikzpicture}\small
		\def\x{1.5};
		\def\y{1};
		\draw(2*\x,1*\y) node(n11){$(1,1)$};
		\draw(1.5*\x,2*\y) node(n12){$(1,2)$};
		\draw(2.5*\x,2*\y) node(n21){$(2,1)$};
		\draw(1*\x,3*\y) node(n13){$(1,3)$};
		\draw(2*\x,3*\y) node(n22){$(2,2)$};
		\draw(3*\x,3*\y) node(n31){$(3,1)$};
		\draw(1.5*\x,4*\y) node(n23){$(2,3)$};
		\draw(2.5*\x,4*\y) node(n32){$(3,2)$};
		\draw(2*\x,5*\y) node(n33){$(3,3)$};

		\draw(n11) -- (n21);
		\draw(n11) -- (n12);
		\draw(n21) -- (n22);
		\draw(n12) -- (n22);
		\draw(n12) -- (n13);
		\draw(n21) -- (n31);
		\draw(n31) -- (n32);
		\draw(n13) -- (n23);
		\draw(n32) -- (n33);
		\draw(n22) -- (n32);
		\draw(n22) -- (n23);
		\draw(n23) -- (n33);
		\end{tikzpicture}
		\caption{Representation of $\mathbb{M}$.}
		\label{fig:cu_lattice}
	\end{subfigure}
	\hspace*{.5cm}
	\begin{subfigure}[b]{.45\textwidth}
		\centering
		\begin{tikzpicture}\small
		\def\x{1.5};
		\def\y{1};
		\draw(2*\x,1*\y) node(n11){$(1,1)$};
		\draw(2*\x,2.5*\y) node(n22){$(2,2)$};
		\draw(1.5*\x,3.5*\y) node(n23){$(2,3)$};
		\draw(2.5*\x,3.5*\y) node(n32){$(3,2)$};
		\draw(2*\x,4.5*\y) node(n33){$(3,3)$};

		\draw(n11) -- (n22);
		\draw(n32) -- (n33);
		\draw(n22) -- (n32);
		\draw(n22) -- (n23);
		\draw(n23) -- (n33);
		\end{tikzpicture}
		\caption{Representation of the sublattice $\mathbb{L}$.}
		\label{fig:cu_lattice_shards}
	\end{subfigure}
\end{figure}

\begin{prop} The map $\hat{\imath} \colon FBL \langle \mathbb{L} \rangle \longrightarrow FBL\langle \mathbb{M} \rangle$ is an isomorphic lattice embedding.\end{prop}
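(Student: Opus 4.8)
The plan is to apply Theorem~\ref{theo:caraisomorpinto}. Since $\mathbb{L}$ is a finite (hence distributive) sublattice of $\mathbb{M}$ --- the only non-obvious closures being $(2,3)\vee(3,2)=(3,3)\in\mathbb{L}$ and $(2,3)\wedge(3,2)=(2,2)\in\mathbb{L}$ --- that theorem says $\hat{\imath}$ is an into isomorphism if and only if every lattice homomorphism $y^{*}\colon\mathbb{L}\longrightarrow[-1,1]$ extends to a lattice homomorphism $\widehat{y}^{*}\colon\mathbb{M}\longrightarrow[-1,1]$. So the whole argument reduces to producing such extensions, a direct computation on these two small lattices.

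First I would record the order structure of $\mathbb{L}$: it is the five-element lattice with $(1,1)<(2,2)$, with $(2,3)$ and $(3,2)$ incomparable and lying strictly between $(2,2)$ and $(3,3)$, and with $(2,3)\wedge(3,2)=(2,2)$, $(2,3)\vee(3,2)=(3,3)$. Then, given $y^{*}\in\mathbb{L}^{*}$, I would set $\alpha:=y^{*}((1,1))$, $\beta:=y^{*}((2,2))$, $\gamma:=y^{*}((3,3))$, so that $\alpha\le\beta\le\gamma$; since $[-1,1]$ is linearly ordered, applying $y^{*}$ to the identities $(2,3)\wedge(3,2)=(2,2)$ and $(2,3)\vee(3,2)=(3,3)$ forces $\min\{y^{*}((2,3)),y^{*}((3,2))\}=\beta$ and $\max\{y^{*}((2,3)),y^{*}((3,2))\}=\gamma$, i.e. $\{y^{*}((2,3)),y^{*}((3,2))\}=\{\beta,\gamma\}$. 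The key observation is that this forces $y^{*}$ to factor, on all of $\mathbb{L}$, through a single coordinate projection: if $y^{*}((2,3))=\gamma$ and $y^{*}((3,2))=\beta$ then $y^{*}=h\circ(\pi_{2}|_{\mathbb{L}})$, while if $y^{*}((2,3))=\beta$ and $y^{*}((3,2))=\gamma$ then $y^{*}=h\circ(\pi_{1}|_{\mathbb{L}})$, where $\pi_{1},\pi_{2}\colon\mathbb{M}\longrightarrow\{1,2,3\}$ are the coordinate projections and $h\colon\{1,2,3\}\longrightarrow[-1,1]$ is the monotone map with $h(1)=\alpha$, $h(2)=\beta$, $h(3)=\gamma$ (when $\beta=\gamma$ the two alternatives agree).

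The extension is then immediate: the coordinate projections $\pi_{1},\pi_{2}$ of the product of chains $\mathbb{M}$ onto the chain $\{1,2,3\}$ are lattice homomorphisms, and any monotone map between chains is a lattice homomorphism, so $\Phi:=h\circ\pi_{k}\colon\mathbb{M}\longrightarrow[-1,1]$ is a lattice homomorphism for the appropriate $k\in\{1,2\}$, and by the previous paragraph $\Phi$ restricts to $y^{*}$ on $\mathbb{L}$. Hence condition (3) of Theorem~\ref{theo:caraisomorpinto} holds, and $\hat{\imath}$ is an isomorphic lattice embedding.

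I do not expect any genuine obstacle here: the lattices are finite, and the fact that the target $[-1,1]$ is a chain is exactly what pins down the behaviour of $y^{*}$ on the incomparable pair $(2,3),(3,2)$, so the only thing to be careful about is the short case split that describes $\mathbb{L}^{*}$ completely. (To match the section heading one also needs that $\hat{\imath}$ is not isometric; that is a separate matter, which I would settle afterwards by exhibiting a concrete $f\in\lat\{\delta_{x}:x\in\mathbb{L}\}$ with $\Vert f\Vert_{FBL\langle\mathbb{M}\rangle}<\Vert f\Vert_{FBL\langle\mathbb{L}\rangle}$.)
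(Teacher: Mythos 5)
Your proof is correct and follows essentially the same route as the paper: both reduce the statement to condition (3) of Theorem~\ref{theo:caraisomorpinto} and then verify that every $y^*\in\mathbb{L}^*$ extends, via a case split on how $y^*$ orders $(2,3)$ and $(3,2)$, producing the same extensions in the non-degenerate cases. Your observation that every $y^*\in\mathbb{L}^*$ factors as $h\circ\pi_k|_{\mathbb{L}}$ through a coordinate projection is a nice refinement: it makes the verification that the extension is a lattice homomorphism automatic (a composition of lattice homomorphisms between chains and products of chains), whereas the paper leaves this as ``a standard computation.''
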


\begin{proof}
Let $x^* \colon \mathbb{L} \longrightarrow [-1,1]$ be a lattice homomorphism. Note that $$x^*((2,2)) = \min \{ x^*((2,3)), x^*((3,2))\}$$and $$x^*((3,3)) = \max \{ x^*((2,3)), x^*((3,2))\}.$$ Let us define an extension $y^* \colon \mathbb{M} \longrightarrow [-1,1]$ of $x^*$. We put $y^*(x) := x^*(x)$ for every $x \in \mathbb{L}$. We have to define $y^*((1,2)), y^*((1,3)), y^*((2,1))$ and $y^*((3,1))$ so that $y^*$ is a lattice homomorphism. Notice that a necessary and sufficient condition for $y^*$ to be a lattice homomorphism is that the restriction of $y^*$ to any diamond of $\mathbb{M}$ (i.e.~its restriction to any sublattice isomorphic to the lattice defined in Example \ref{examp:nosublattice}) is a lattice homomorphism; in particular, the restriction of $y^*$ to any diamond must take at most two different values.

We must distinguish several cases:

\begin{itemize}

\item If $x^*((2,3)) = x^*((3,2))$, we define $y^*(y) := x^*((2,3))$ for every $y \in \{(2,1),(3,1)\}$ and $y^*(y):=x^*((1,1))$ if $y\in \{(1,2),(1,3)\}$.

\item If $x^*((3,2)) < x^*((2,3))$, then necessarily $y^*((1,3))=x^*((2,3))$ and $y^*((1,2))=x^*((2,2))$. Since $x^*((1,1))\leq x^*((2,2))$, this implies that $y^*((2,1))=x^*((1,1))$ and so $y^*((3,1))=y^*((2,1))=x^*((1,1))$.

\item The case that $x^*((2,3)) < x^*((3,2))$ is symmetric to the previous one.
\end{itemize}
In any of the previous cases a standard computation shows that the map $y^*$ is a lattice homomorphism. Thus, every lattice homomorphism in $\mathbb{L}^*$ extends to a lattice homomorphism in $\mathbb{M}^*$, so the conclusion follows from Theorem \ref{theo:caraisomorpinto}.
\end{proof}

\bigskip

From the above estimates it is clear that every lattice homomorphism $x^*\colon \mathbb L\longrightarrow [-1,1]$ takes at most three different values (either on the chain $\{(1,1),(2,2),(2,3)\}$ or on the chain $\{(1,1),(2,2),(3,2)\}$) and can always be extended to $\mathbb{M}$. Furthermore, we have shown that if it takes three different values then it extends to $\mathbb M$ in a unique way.
The following result shows that $\mathbb L$ and $\mathbb M$ satify our purposes.

\begin{prop}\label{PropExample2}
The isomorphic lattice embedding $\hat{\imath} \colon FBL \langle \mathbb{L} \rangle \longrightarrow FBL\langle \mathbb{M} \rangle$ is not isometric.
\end{prop}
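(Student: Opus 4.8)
The plan is to exhibit an explicit element $f \in \lat\{\delta_x : x \in \mathbb{L}\}$ whose norm strictly increases when computed inside $FBL\langle\mathbb M\rangle$ instead of $FBL\langle\mathbb L\rangle$. Since $\hat\imath$ is a lattice homomorphism with $\|\hat\imath\| = 1$, we always have $\|f\|_{FBL\langle\mathbb M\rangle} \le \|f\|_{FBL\langle\mathbb L\rangle}$; the content is to find $f$ for which this is strict, which by Theorem~\ref{theo:condipedro}'s proof-style reasoning amounts to the failure of the extension property for lattice homomorphisms into $\ell_1^n$ for some small $n$ (here $n=2$ should suffice). Concretely, the natural candidate is built from the three ``corner'' generators of the chain on which homomorphisms take three values: set, for instance, $f := (\delta_{(3,3)} - \delta_{(2,2)}) \wedge (\delta_{(2,2)} - \delta_{(1,1)})$, or a similar lattice word involving $\delta_{(1,1)}, \delta_{(2,2)}, \delta_{(2,3)}, \delta_{(3,2)}$ chosen so that $f$ is sensitive to the ``freedom'' present in $\mathbb{L}^*$ that is killed by the rigidity of $\mathbb{M}^*$.

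First I would compute $\|f\|_{FBL\langle\mathbb L\rangle}$ directly from the supremum formula, using the description of $\mathbb{L}^*$: a homomorphism $x^* \in \mathbb{L}^*$ is determined by the values $a = x^*((2,3))$, $b = x^*((3,2))$ with $x^*((2,2)) = \min\{a,b\}$, $x^*((3,3)) = \max\{a,b\}$, and $x^*((1,1)) \le \min\{a,b\}$, all in $[-1,1]$. The supremum in the FBL norm is over finite families $x_1^*,\dots,x_k^*$ with $\sup_{x\in\mathbb L}\sum_i |x_i^*(x)| \le 1$; a one- or two-point family already gives a concrete lower bound, and a compactness/convexity argument (the supremum is attained, and one may restrict to extreme configurations) gives the matching upper bound. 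Then I would compute $\|f\|_{FBL\langle\mathbb M\rangle}$: here every $x^* \in \mathbb{M}^*$ takes at most two values on each diamond, and in particular on the relevant generators $f(x^*)$ is forced into a smaller range; evaluating the same supremum over this more constrained set $\mathbb{M}^*|_{\mathbb L}$ yields a strictly smaller number. The key quantitative point is that the $\mathbb{M}^*$-admissible configurations of $(x_1^*,\dots,x_k^*)$ restricted to $\mathbb{L}$ form a \emph{proper} subset of the $\mathbb{L}^*$-admissible ones — precisely the subset respecting the forced extension identities described after the previous proposition — and $f$ is engineered to detect this gap.

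The main obstacle I anticipate is the bookkeeping in the two norm computations: the FBL norm is a supremum over arbitrarily large finite families with a constraint that is itself a supremum over the (infinite) lattice, so one must argue that the relevant suprema are attained on small, explicitly describable families. For $\mathbb{L}$ I would reduce to families supported on the chain $\{(1,1),(2,2),(2,3)\}$ (or its mirror) and use that on a chain the FBL norm has a known closed form (cf.\ the linearly-ordered case \cite[Lemma 3.6]{jdthesis}); for $\mathbb{M}$ I would use the two-valued rigidity plus the uniqueness of the three-valued extension to collapse the admissible set. A secondary point requiring care: one must verify that the chosen $f$ genuinely lies in the \emph{vector} sublattice generated by $\{\delta_x : x\in\mathbb L\}$ (so that the identification of the two norms via $\hat\imath$ is legitimate) and that no cancellation trivializes it; taking $f$ to be an explicit lattice polynomial in four of the five generators of $\mathbb{L}$ makes this transparent. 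Once a single numerical strict inequality $\|f\|_{FBL\langle\mathbb M\rangle} < \|f\|_{FBL\langle\mathbb L\rangle}$ is in hand, the proposition follows immediately.
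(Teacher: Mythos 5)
Your overall strategy (exhibit a single $f$ with $\|\hat\imath(f)\|_{FBL\langle\mathbb M\rangle}<\|f\|_{FBL\langle\mathbb L\rangle}$) is the right one, and indeed, by density of $\lat\{\delta_x:x\in\mathbb L\}$, if $\hat\imath$ fails to be isometric some lattice polynomial must witness it. But your proposal stops exactly where the proof has to begin, and the one concrete candidate you offer provably does not work. Take $f=(\delta_{(3,3)}-\delta_{(2,2)})\wedge(\delta_{(2,2)}-\delta_{(1,1)})$. For any $x^*\in\mathbb L^*$ one has $2f(x^*)\le\bigl(x^*((3,3))-x^*((2,2))\bigr)+\bigl(x^*((2,2))-x^*((1,1))\bigr)=x^*((3,3))-x^*((1,1))$, so for any admissible family $(x_i^*)_{i\le n}$ (i.e.\ $\sup_{x\in\mathbb L}\sum_i|x_i^*(x)|\le1$) we get $2\sum_i f(x_i^*)\le\sum_i|x_i^*((3,3))|+\sum_i|x_i^*((1,1))|\le2$, whence $\|f\|_{FBL\langle\mathbb L\rangle}\le1$. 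On the other hand the single homomorphism with $x^*((1,1))=-1$, $x^*((2,2))=x^*((2,3))=0$, $x^*((3,2))=x^*((3,3))=1$ gives $f(x^*)=1$, and its (unique) extension to $\mathbb M$ still has sup-norm $1$, so it is admissible for the $\mathbb M$-norm as well. Hence $\|f\|_{FBL\langle\mathbb L\rangle}=\|\hat\imath(f)\|_{FBL\langle\mathbb M\rangle}=1$ and this $f$ detects nothing. The same single-functional obstruction will defeat most naive lattice words: any configuration realized by \emph{one} homomorphism survives passage to $\mathbb M$, because every element of $\mathbb L^*$ extends (that is the content of the preceding proposition).

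The missing idea is the precise mechanism by which the $\mathbb M$-norm drops: one must use \emph{at least two} functionals $x_1^*,x_2^*$ whose sum constraint is nearly tight over $\mathbb L$ but whose \emph{forced} extensions both have modulus close to $1$ at a point of $\mathbb M\setminus\mathbb L$ (here $(1,3)$), so that the pair violates the $\mathbb M$-normalization, and then one needs an $f$ that is large at $x_1^*$ and $x_2^*$ but \emph{vanishes away from them}, so that no other admissible configuration can recover the lost mass. The paper achieves this by working through the identification $\phi_{\mathbb L}\colon FBL\langle\mathbb L\rangle\to C(K_{\mathbb L})$ of \cite[Theorem 2.7]{amrr2020} and taking $f=\phi_{\mathbb L}^{-1}(g)$ for a Tietze--Urysohn bump $g$ supported on small neighborhoods of two such homomorphisms; this $f$ is not a lattice polynomial in the generators, which is precisely what buys the localization your plan lacks. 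Your suggested reductions (attainment of the supremum on ``small, explicitly describable families'', restriction to ``extreme configurations'', a ``closed form'' on chains) are asserted without justification and are not obviously available, since the number $n$ of functionals in the defining supremum is unbounded. As it stands the proposal has no verified witness and no quantitative bound on the $\mathbb M$-norm, so the proof is not there yet.
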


\begin{proof}
We will prove that there exists some $f \in FBL\langle \mathbb{L} \rangle$ such that $\|f\|_{FBL \langle \mathbb{L} \rangle} > \| \hat{\imath}(f) \|_{FBL\langle \mathbb{M} \rangle}$. We use the same notation than in Section \ref{SectionIsomorphic}.
Recall that $\phi_\mathbb L \colon FBL\langle \mathbb L\rangle\longrightarrow C(K_\mathbb L)$ is a lattice isomorphism, so any continuous function $g\colon K_\mathbb{L} \longrightarrow \mathbb{R}$ is identified through $\phi_\mathbb{L}$ with an element $\phi_\mathbb L^{-1}(g)\in FBL\langle \mathbb{L} \rangle$, which is just obtained extending $g$ to $\mathbb{L}^*$ through the equality $g(\lambda x^*):=\lambda g(x^*)$ for every $x^* \in K_\mathbb{L}$ and every $\lambda \in [0,1]$.

Fix $0<\varepsilon<\frac{1}{2}$ and set the lattice homomorphisms $x_1^*, x_2^* \in K_\mathbb{L}$ given by:

\begin{itemize}
	\item $x_1^*((1,1))= -1, \text{ } x_1^*((2,2))=x_1^*((2,3))= 0, \text{ } x_1^*((3,2)) = x_1^*((3,3)) =\varepsilon$.
	
	\item $x_2^*((1,1)) = 0, \text{ } x_2^*((2,2)) = x_2^*((3,2))= \varepsilon, \text{ } x_2^*((2,3)) =x_2^*((3,3))=1$.
\end{itemize}

For $i = 1, 2$ let $V_i := \{ x^* \in K_{\mathbb{L}} : |x^*(x) - x_i^*(x)| < \frac{\varepsilon}{2} \text{ for every } x \in \mathbb{L} \}$ be a neighborhood of $x_i^*$ in $K_{\mathbb{L}}$. By the classical Tietze's extension theorem, we can consider $g\colon K_\mathbb{L} \longrightarrow [0,1]$ a continuous function such that $g(x_1^*)=g(x_2^*)=1$ and $g(x^*)=0$ for every $x^* \in K_\mathbb{L} \setminus (V_1 \cup V_2)$. Let $f \in FBL\langle \mathbb L\rangle$ be the extension of $g$ to $\mathbb{L}^*$ by the equality $f(\lambda x^*)= \lambda g(x^*)$ for every $x^* \in K_\mathbb{L}$ and every $\lambda \in [0,1]$.

Notice that $\sup_{x \in \mathbb{L}} (|x_1^*(x)|+|x_2^*(x)|) = 1+\varepsilon$. Thus,
$$\|f\|_{FBL\langle \mathbb{L} \rangle} = \sup \left\{ \sum_{i=1}^n |f(y_i^*)| : n \in \mathbb{N}, \, y_1^*, \ldots, y_n^* \in \mathbb{L}^*, \sup_{y \in \mathbb{L}} \sum_{i=1}^n |y_i^*(y)| \leq 1 \right\}\geq$$
$$\geq \left\vert f\left(\frac{1}{1+\varepsilon}x_1^*\right) \right\vert+ \left\vert f\left(\frac{1}{1+\varepsilon}x_2^*\right) \right\vert=\frac{2}{1+\varepsilon}.$$

\bigskip
We are going to show now that $\|\hat{\imath}(f)\|_{FBL\langle \mathbb{M} \rangle} \leq \frac{1}{1-\varepsilon}$. Suppose that $\|\hat{\imath}(f)\|_{FBL\langle \mathbb{M} \rangle} > \frac{1}{1-\varepsilon}$, and let us obtain a contradiction. 
Notice that each element $x^* \in V_1 \cup V_2$ takes three different values, so it admits a unique extension $\hat x^*$ to $\mathbb{M}$. Furthermore, if $x^* \in V_1 $ then $\hat x^*((1,3))=x^*((1,1))< -1+\frac{\varepsilon}{2}$. Analogously, if $x^* \in V_2 $ then $\hat x^*((1,3))=x^*((3,3))> 1-\frac{\varepsilon}{2}$. In any case, 
\begin{equation}
\label{eq1}
\|\hat x^* \|=\sup_{x \in \mathbb{M}}|\hat x^*(x)|=|\hat x^*((1,3))| > 1-\frac{\varepsilon}{2}>1-\varepsilon.
\end{equation}

Notice that $$\|\hat{\imath}(f)\|_{FBL\langle \mathbb{M} \rangle} = \sup \left\{ \sum_{i=1}^n |f(y_i^*\vert_{\mathbb{L}})| : n \in \mathbb{N}, \, y_1^*, \ldots, y_n^* \in \mathbb{M}^*, \sup_{y \in \mathbb{M}} \sum_{i=1}^n |y_i^*(y)| \leq 1 \right\}.$$

Thus, if $\|\hat{\imath}(f)\|_{FBL\langle \mathbb{M} \rangle} > \frac{1}{1-\varepsilon}$ then there exists $y_1^*, \ldots, y_n^* \in \mathbb{M}^*$ such that $\sum_{i=1}^n |f(y_i^*\vert_{\mathbb{L}})|>\frac{1}{1-\varepsilon}$. Without loss of generality, we can assume that $|f(y_i^*\vert_{\mathbb{L}})|\neq 0$ for every $i\leq n$. By the definition of $f$, this is in turn equivalent to the fact that $\frac{y_i^*\vert_{\mathbb{L}}}{\|y_i^*\vert_{\mathbb{L}}\|} \in V_1 \cup V_2$ for every $i\leq n$. But then, by (\ref{eq1}), 
$$ \frac{|y_i^*((1,3))|}{\|y_i^*\vert_{\mathbb{L}}\|}> 1-\varepsilon \mbox{ for every } i\leq n.$$

Thus,
$$ \sum_{i=1}^n |f(y_i^*\vert_{\mathbb{L}})|=\sum_{i=1}^n \|y_i^*\vert_{\mathbb{L}}\|\left\vert f\left(\frac{|y_i^*|}{\|y_i^*\vert_{\mathbb{L}}\|}\right)\right \vert \leq \sum_{i=1}^n \|y_i^*\vert_{\mathbb{L}}\| <  \sum_{i=1}^n\frac{|y_i^*((1,3))|}{1-\varepsilon} \leq \frac{1}{1-\varepsilon},$$
which yields the desired contradiction.

In conclusion, $\|\hat{\imath}(f)\|_{FBL\langle \mathbb{M} \rangle} \leq \frac{1}{1-\varepsilon}$ and $\|f\|_{FBL\langle \mathbb{L} \rangle} \geq \frac{2}{1+\varepsilon}$. Since $\varepsilon>0$ is arbitrarily small, we conclude that $\hat \imath$ is not an isometry.
\end{proof}

\section*{Acknowledgements}
The authors are deeply grateful to Pedro Tradacete for sharing part of his research in progress and for fruitful conversations on the topic of the paper.

Research partially supported by Fundaci\'{o}n S\'{e}neca [20797/PI/18] and by project MTM2017-86182-P (Government of Spain, AEI/FEDER, EU). The research of G. Mart\'inez-Cervantes
was co-financed by the European Social Fund and the Youth European Initiative under Fundaci\'on S\'eneca
[21319/PDGI/19]. J. D. Rodr\'iguez Abell\'an was also supported by project 20262/FPI/17, Fundaci\'on S\'eneca, Regi\'on de Murcia (Spain). The research of A. Rueda Zoca was also supported by Juan de la Cierva-Formaci\'on fellowship FJC2019-039973, by MICINN (Spain) Grant PGC2018-093794-B-I00 (MCIU, AEI, FEDER, UE), by Junta de Andaluc\'ia Grant A-FQM-484-UGR18 and by Junta de Andaluc\'ia Grant FQM-0185.

\end{document}